\date{}
\date{}
\renewcommand{\uppercasenonmath}[1]{}
\numberwithin{equation}{section} \theoremstyle{plain}
\newtheorem{lem}{Lemma}[section]
\newtheorem{cor}[lem]{Corollary}
\newtheorem{prop}[lem]{Proposition}
\newtheorem{thm}[lem]{Theorem}
\newtheorem{Defn}[lem]{Definition}
\newtheorem{Ex}[lem]{Example}
\newtheorem{Quest}[lem]{Question}
\newtheorem{Property}[lem]{Property}
\newtheorem{Properties}[lem]{Properties}
\newtheorem{Subprops}{}[lem]
\newtheorem{Para}[lem]{}
\newtheorem{rem}[lem]{Remark}
\newenvironment{exa}{\begin{Ex}\rm}{\end{Ex}}
\newtheorem*{ack*}{ACKNOWLEDGEMENTS}
\newcommand{\pf}{\noindent\begin {proof}}
\newcommand{\epf}{\end{proof}}
\newcommand{\ra}{\rightarrow}
\newcommand{\Ext}{\mbox{\rm Ext}}
\newcommand{\im}{\mbox{\rm im}}
\begin{document}
\begin{center}
{\large  \bf  Totally acyclic complexes and homological invariants over arbitrary rings}

\vspace{0.5cm}  \vspace{0.5cm} Jian Wang$^{a}$, Yunxia Li$^{a}$, Jiangsheng Hu$^{b}$\footnote{Corresponding author.} and Haiyan Zhu$^{c}$\\
$^{a}$College of  Science, Jinling Institute of
Technology, Nanjing 211169, China\\

$^{b}$School of Mathematics, Hangzhou Normal University,
 Hangzhou 311121, China\\

$^{c}$School of Mathematics Sciences, Zhejiang University of Technology, Hangzhou 310023, China\\
E-mail: wangjian@jit.edu.cn, liyunxia@jit.edu.cn, hujs@hznu.edu.cn and hyzhu@zjut.edu.cn \\
\end{center}


\bigskip
\centerline { \bf  Abstract}

\bigskip
\leftskip10truemm \rightskip10truemm \noindent
In this paper, we investigate equivalent characterizations of the condition that every acyclic complex of projective, injective, or flat modules is totally acyclic over a general ring $R$. We provide examples to illustrate relationships among these conditions and show that several are closely tied to the homological invariants silp$(R)$, spli$(R)$ and {\rm sfli}$(R)$. We also give sufficient conditions for the equality spli$(R)$ $=$ silp$(R)$, thereby refining results due to Ballas-Chatzistavridis and Wang-Yang. Further, we extend a result of Christensen-Foxby-Holm on characterizations of Iwanaga-Gorenstein rings to the non-commutative setting. This generalizes a theorem of Estrada-Fu-Iacob, offering additional equivalent characterizations under a general assumption while also yielding characterizations of the Nakayama conjecture.\\
\vbox to 0.3cm{}\\
{\it Key Words:} acyclic complex; totally acyclic complex; projective dimension; injective dimension; flat dimension.\\
{\it 2020 Mathematics Subject Classification:} 16E65; 18G25; 18G20.

\leftskip0truemm \rightskip0truemm
\bigskip
\section { \bf Introduction}
In this paper, all rings $R$ are associative unital rings and all modules are left $R$-modules. We denote by $R^{op}$ the opposite ring of $R$. Recall that an acyclic complex $X^\bullet$ of projective (resp., injective) $R$-modules is \emph{totally acyclic} if  Hom$_R(X^\bullet,M)$ (resp., Hom$_R(M,X^\bullet)$) is an exact sequence for every projective (resp., injective) $R$-module $M$; likewise, an acyclic complex $X^\bullet$ of flat $R$-modules is called
\emph{F-totally acyclic} if $E\otimes_RX^\bullet$ is an exact sequence for each injective $R^{op}$-module $E$. An \emph{Iwanaga-Gorenstein} ring is a left and right Noetherian ring that has finite self-injective dimension on both sides (see \cite{Iwer,Iwer2}). It is well-known that a commutative Gorenstein ring of finite Krull dimension is Iwanaga-Gorenstein. Over an Iwanaga-Gorenstein ring every acyclic complex of projective (resp., injective) modules is totally acyclic and every acyclic complexes of flat modules is $F$-totally acyclic. The study of equivalent characterizations of the condition that every acyclic complex of projective (resp., injective) modules is totally acyclic was initiated in 2006 by Iyengar-Krause \cite{IyHkrAcvstac}, and has been extensively investigated during the last years by Murfet-Salarian \cite{MurSaT}, Estrada-Fu-Iacob \cite{EHuATotal}, Christensen-Kato \cite{LWchrKatoG} and Christensen-Foxby-Holm \cite{LchrFoxHol}. Now, the equivalent characterizations over commutative Noetherian rings are made explicit in the following result.

\begin{thm}{\rm (\cite[Theorem 19.5.25]{LchrFoxHol})}\label{thm:las} Let $R$ be a commutative Noetherian ring. Then the following are equivalent:
\begin{enumerate}
\item[{\rm(1)}] Every acyclic complex of projective $R$-modules is totally acyclic.
\item[{\rm(2)}] Every acyclic complex of injective $R$-modules is totally acyclic.
\item[{\rm(3)}] Every acyclic complex of flat $R$-modules is $F$-totally acyclic.
\item[{\rm(4)}] The local ring $R_{\mathfrak{p}}$ is Iwanaga-Gorenstein for every prime ideal $\mathfrak{p}$ of $R$.
\end{enumerate}
\end{thm}

The main aim of this paper is to study these equivalent conditions for a general ring $R$.
Throughout this paper, we use the following notations.
\begin{enumerate}
\item[{{\rm(1)$^{op}$}}] Every acyclic complex of projective $R^{op}$-modules is totally acyclic.
\item[{{\rm(2)$^{op}$}}] Every acyclic complex of injective $R^{op}$-modules is totally acyclic.
\item[{{\rm(3)$^{op}$}}] Every acyclic complex of flat $R^{op}$-modules is $F$-totally acyclic.
\end{enumerate}

Thanks to Proposition \ref{prop: prop0801} and  Example \ref{em: rem2060501} below, we obtain the following relations among the six conditions above.

\small  $$\xymatrix{&&&&&&&&\\
  (1)\ar[dd]_{{\rm Example \ \ref{em: rem2060501}}(iv),(i)}|\times \ar[rrrr]^{{\rm Example \ \ref{em: rem2060501}}(iv),(v)}|\times&&&&(2)   &&&& (3) \ar[llll]|\times_{{\rm Example \ \ref{em: rem2060501}}(vi),(v)}   \ar[dd]|\times^{{\rm Example \ \ref{em: rem2060501}}(vi),(iii)} \ar@/_3pc/[llllllll]_{\ {\rm Proposition \ \ref{prop: prop0801}}(ii)\Rightarrow(i)}\\
  &&&&&&\\
(1)^{op}&&&&(2)^{op}\ar[rrrr]|\times_{{\rm Example \ \ref{em: rem2060501}}(ii),(iii)}\ar@/^/[uullll]^{\ {\rm Proposition \ \ref{prop: prop0801}}(iii)\Rightarrow(i)}\ar[uu]_{{\rm Example \ \ref{em: rem2060501}}(ii),(v)}|\times\ar[llll]|\times^{{\rm Example \ \ref{em: rem2060501}}(ii),(i)} \ar@/_/[uurrrr]_{\ {\rm Proposition \ \ref{prop: prop0801}}(iii)\Rightarrow(ii)} &&&&(3)^{op} }$$

Note that the conclusion that $(2)^{op}$ implies $(3)$ is essentially taken from \cite[Claim 2.3]{LWchrKatoG} (see Lemma \ref{lem: lemaftac-3}($ii$)). As with the other relations, does $(1)$ imply $(3)$ for arbitrary rings? At this moment, we have no idea how to reach this goal in general,
but there is an interesting case in which this does hold true: every Gorenstein projective $R$-module is Gorenstein flat (see Corollary \ref{cor:2.5}). Recall that an $R$-module $M$ is called \emph{Gorenstein projective} \cite{enochs:gipm} if it is a cycle of a totally acyclic complex of projective $R$-modules, and $M$ is called \emph{Gorenstein flat} \cite{EJT} if it is a cycle of an $F$-totally acyclic complex of flat $R$-modules. It is not at all clear from the definitions that Gorenstein projective $R$-modules are Gorenstein flat. Examples of rings satisfying that every Gorenstein projective $R$-module is Gorenstein flat include but not limited to: right coherent rings with finite left finitistic dimension (see \cite[Proposition 3.4]{H}),  right coherent rings of cardinality at most $\aleph_{n}$ for a fixed natural number $n$ (see \cite[Remarks 6.8 and 2.8]{CCLP}), as well as rings such that every injective $R^{op}$-module has finite flat dimension (see \cite[Remark 2.3($ii$)]{I on the fin}). But for arbitrary rings this is still an open question.

We recall two invariants spli$(R)$ and silp$(R)$ introduced by Gedrich-Gruenberg \cite{Gecomple} and studied by some authors (see, for example \cite{Beli and Reiten,DeTaon,I on certain,I on the fin,I and O on the flat,CUJensen11,Otale1}). More precisely, let spli$(R)$ (resp., silp$(R)$) be the supremum of the projective (resp., injective) lengths of the injective (resp., projective)  $R$-modules.  Whether the equality spli$(R)$ $=$ silp$(R)$ holds is an open question for a general ring $R$
 (see \cite{DemEmaHom,I on certain,Gecomple}). In the special case where
$R$ is an Artin algebra, the equality spli$(R)$ $=$ silp$(R)$ is equivalent to a long-standing conjecture in representation theory, the so-called Gorenstein symmetry conjecture, which is stated as Conjecture 13 at the end of \cite{ARS} (see also \cite{AB CM,Beli and Reiten}).

Recall that a ring $R$ is called \emph{left Gorenstein regular} \cite{EnochMatrix} if the supremum of Gorenstein projective dimensions (\cite{H}) of all  $R$-modules  is finite. It is well-known that $R$ is left Gorenstein regular if and only if  {\rm spli}$(R)$ $=$ {\rm silp}$(R)$ $<$ $\infty$ if and only if {\rm spli}$(R)$ $<$ $\infty$ and  {\rm silp}$(R)$ $<$ $\infty$ (see \cite{Beli and Reiten,I on the fin,EnochMatrix}).

 Let {\rm sfli}$(R)$ be the supremum of the flat lengths of the injective $R$-modules. The invariant sfli$(R)$ has been introduced by Ding-Chen \cite{DC} and studied by some authors (see, for example \cite{Ballas,DLW,I on the fin}). It is clear that sfli$(R)$ $\leq$ spli$(R)$. Following \cite{DB,LWchrisetwG}, we denote by {\rm wGgldim}$(R)$ the supremum of Gorenstein flat dimensions of all $R$-modules.

Our first main result demonstrates that the equivalence among conditions (1)-(3) and their opposites $(1)^{op}$-$(3)^{op}$ above forces the equalities spli$(R)$ $=$ silp$(R)$ and {\rm sfli}$(R)= {\rm sfli}(R^{op})$.

  \begin{thm}\label{ThA} The following statements are true for any ring $R$:
 \begin{enumerate}
\item[{(i)}] If two conditions (1) and (2) are equivalent, then {\rm spli$(R)$ $=$ silp$(R)$}, and therefore $R$ is left Gorenstein regular if and only if  {\rm spli}$(R)$ is finite, or equivalently, {\rm silp}$(R)$ is finite.
\item[{(ii)}] Assume that one of the conditions (a)-(c) is true.
\begin{enumerate}
\item[(a)] Two conditions (2) and (3) are equivalent.
\item [(b)] Two conditions (2) and (2)$^{op}$ are equivalent.
\item[(c)] Two conditions (3) and (3)$^{op}$ are equivalent.
\end{enumerate}
Then {\rm sfli}$(R)= {\rm sfli}(R^{op})$ and {\rm silp}$(R)$ $\leq$ {\rm spli}$(R)$, and therefore {\rm wGgldim}$(R)$ $<$ $\infty$ if and only if  {\rm sfli}$(R)$ $<$ $\infty$.
\end{enumerate}
 \end{thm}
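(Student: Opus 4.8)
The plan is to reduce both parts to a handful of inequalities between the invariants and then feed the hypotheses through them. The elementary inputs I will use repeatedly are the ``finiteness triggers'': if ${\rm spli}(R)<\infty$ then condition (2) holds, and if ${\rm silp}(R)<\infty$ then condition (1) holds. Both follow by dimension shifting: for an acyclic complex of injectives with cocycles $Z^i$ and an injective test module $M$ one has $\Ext^1_R(M,Z^i)\cong\Ext^{k+1}_R(M,Z^{i-k})$, which vanishes once $k+1>\pd_R M$, and $\pd_R M\le{\rm spli}(R)$; dually for (1), using $\id_R Q\le{\rm silp}(R)$ for projective $Q$. The identical argument with $\Tor$ and $\fd$ gives the flat trigger: ${\rm sfli}(R^{op})<\infty$ forces (3), and over $R^{op}$, ${\rm sfli}(R)<\infty$ forces $(3)^{op}$.

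For (i) the real content is two sharp inequalities in the reverse direction, which I call (T1) and (T2): condition (1) forces ${\rm silp}(R)\le{\rm spli}(R)$, and condition (2) forces ${\rm spli}(R)\le{\rm silp}(R)$. These are the technical heart, and I would prove them through complete projective, resp. injective, resolutions in the spirit of Gedrich--Gruenberg and Emmanouil. Granting (T1), (T2) and the triggers, the equality ${\rm spli}(R)={\rm silp}(R)$ drops out by a clean case split. To see ${\rm silp}(R)\le{\rm spli}(R)$: if ${\rm spli}(R)=\infty$ this is trivial, and if ${\rm spli}(R)<\infty$ then the trigger gives (2), the hypothesis that (1) and (2) are equivalent gives (1), and (T1) gives ${\rm silp}(R)\le{\rm spli}(R)$. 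Symmetrically, using the other trigger, the hypothesis and (T2), one gets ${\rm spli}(R)\le{\rm silp}(R)$. Hence ${\rm spli}(R)={\rm silp}(R)$, and the final ``therefore'' is immediate from the quoted characterization: $R$ is left Gorenstein regular if and only if ${\rm spli}(R)={\rm silp}(R)<\infty$.

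For (ii) I would run the same machine one level up, now exploiting the crossing implications of Proposition \ref{prop: prop0801}, namely (2)$\Rightarrow(3)^{op}$, $(2)^{op}\Rightarrow(3)$, (3)$\Rightarrow$(1) and $(2)^{op}\Rightarrow$(1) together with their opposites. The inequality ${\rm silp}(R)\le{\rm spli}(R)$ is the easier half: whenever the relevant condition holds, one of these implications produces (1) and then (T1) gives ${\rm silp}(R)\le{\rm spli}(R)$; whenever it fails, the contrapositive of a trigger forces ${\rm spli}(R)=\infty$ (either directly from the failure of (2), or from ${\rm sfli}(R)=\infty$ via ${\rm sfli}(R)\le{\rm spli}(R)$), so the inequality is trivial. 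For the symmetry ${\rm sfli}(R)={\rm sfli}(R^{op})$ I would isolate a flat analogue (T3): condition (3) forces ${\rm sfli}(R^{op})\le{\rm sfli}(R)$, and $(3)^{op}$ forces ${\rm sfli}(R)\le{\rm sfli}(R^{op})$. Each hypothesis (a)--(c) is then used to link (3) and $(3)^{op}$ via the crossing implications: under (c) one has (3)$\Leftrightarrow(3)^{op}$ outright, while under (a) the chain (3)$\Rightarrow$(2)$\Rightarrow(3)^{op}$ propagates one direction. Combining the flat triggers, (T3) and this linkage through the same case split as in (i) yields ${\rm sfli}(R)={\rm sfli}(R^{op})$, and the concluding equivalence follows by feeding this equality into the known fact that ${\rm Gwgldim}(R)<\infty$ if and only if ${\rm sfli}(R)<\infty$ and ${\rm sfli}(R^{op})<\infty$, which collapses to ${\rm sfli}(R)<\infty$ once the two are equal.

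The main obstacle is twofold. The first and most serious is establishing the sharp inequalities (T1), (T2) and the flat analogue (T3): these are exactly the steps where the purely qualitative statement that ``acyclic complexes are totally acyclic'' must be converted into a quantitative bound on an invariant, and I expect them to require a genuine complete-resolution (Tate) argument rather than the dimension shift that proves the triggers. The second is bookkeeping the handedness in part (ii): hypothesis (a) relates the two conditions of $R$, whereas the conclusion ${\rm sfli}(R)={\rm sfli}(R^{op})$ is two-sided, so closing the \emph{reverse} inequality in the asymmetric cases appears to need the crossing implications of Proposition \ref{prop: prop0801} in full, possibly supplemented by the left--right symmetry of the Gorenstein weak global dimension. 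Determining precisely which combination suffices uniformly across (a), (b) and (c) is the delicate point I would have to check most carefully.
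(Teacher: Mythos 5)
Your architecture --- finiteness ``triggers'', the sharp inequalities (T1)--(T3) extracted from total acyclicity, and a case split on which invariant is finite --- is exactly the paper's strategy. Part (i) goes through as you describe: (T1) and (T2) are precisely what the paper's Lemma \ref{lem: lemactac120701}(i),(iii) (a Horseshoe--Lemma construction placing $\Omega^{n}M$, resp.\ $\Omega^{-n}M$, as a cycle of an acyclic complex of projectives, resp.\ injectives) combined with the $\Ext$-vanishing coming from total acyclicity deliver; no machinery beyond that is needed, so deferring them is harmless. Your route to ${\rm silp}(R)\le{\rm spli}(R)$ in part (ii) is also sound, because there the entry point is ${\rm spli}(R)<\infty$ and your elementary $\pd$-based trigger for condition (2) applies, after which Proposition \ref{prop: prop0801} produces condition (1) under each of (a)--(c).

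The genuine gap is in part (ii), hypotheses (a) and (b), in the cases that start from ${\rm sfli}(R)<\infty$. There you must produce condition (2) from finiteness of the \emph{flat} dimensions of injectives, and your dimension shift only proves the weaker trigger ``${\rm spli}(R)<\infty\Rightarrow(2)$'': shifting $\Ext^{1}_R(E,Z_i)\cong\Ext^{k+1}_R(E,Z_{i+k})$ kills the group once $k+1>\pd_R E$, but over a general ring an injective module of finite flat dimension need not have finite projective dimension, so ${\rm sfli}(R)<\infty$ gives you nothing this way. The flat triggers you do have only yield $(3)^{op}$ from ${\rm sfli}(R)<\infty$, and neither hypothesis (a) (linking (2) with (3)) nor (b) (linking (2) with $(2)^{op}$) can be entered from $(3)^{op}$, since the implication $(3)^{op}\Rightarrow(2)$ in Proposition \ref{prop: prop0801} is only available under a Noetherian hypothesis. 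The missing ingredient is Lemma \ref{lem:lem080205101} ($\check{\rm S}$t'ov\'{\i}$\check{\rm c}$ek; Bazzoni--Cort\'es-Izurdiaga--Estrada): $\Ext^{1}_R(F,M)=0$ for every $F$ of finite flat dimension and every cycle $M$ of an acyclic complex of injectives. This is a genuine theorem, not a dimension shift, and it is what makes ``${\rm sfli}(R)<\infty\Rightarrow(2)$'' (Proposition \ref{lem:lem1501}(ii)) true and lets the argument enter hypotheses (a) and (b); only case (c) survives on your flat triggers and (T3) alone. Note also that you cannot patch this with ``left--right symmetry of ${\rm Gwgldim}$'': the cited symmetry (\cite[Theorem 5.3]{I on the fin}) presupposes that both ${\rm sfli}(R)$ and ${\rm sfli}(R^{op})$ are finite, which is exactly what is at stake.
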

There are two remarks on Theorem \ref{ThA}: $(a)$ If $R$ is a ring with {\rm wGgldim}$(R)$ $<$ $\infty$, then Proposition \ref{lem:lem1501} and \cite[Theorem 5.3]{I on the fin} imply that the six conditions (1)-(3) and $(1)^{op}$-$(3)^{op}$ hold, and thus these conditions are equivalent in this case. However, we employ Example \ref{em: rem2060501-2} to show that there exists a ring $R$ with {\rm wGgldim}$(R)$ $=$ $\infty$ such that the conditions (2) and (2)$^{op}$ are equivalent; $(b)$ Assume that $R$ is a ring which is isomorphic to its opposite ring.   It has been proved by Dalezios and Emmanouil in \cite[Corollary 24]{DemEmaHom} that {\rm silp}$(R)$ $\leq$ {\rm spli}$(R)$. This result can be reobtained by Theorem \ref{ThA} since two conditions $(2)$ and $(2)^{op}$ are equivalent if $R$ is isomorphic to its opposite ring.

A consequence of Theorem \ref{ThA} yields the following result.

\begin{cor}\label{cor:corcothac1} Let $R$ be a ring. Assume that  one of the conditions (a)-(c) in Theorem \ref{ThA}(ii) is true. If the character module of every injective $R^{op}$-module has finite flat dimension, then {\rm spli}$(R)$ $=$ {\rm silp}$(R)$.
\end{cor}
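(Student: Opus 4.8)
The plan is to obtain the statement as a refinement of Theorem \ref{ThA}(ii). Since one of the conditions (a)--(c) is assumed, that theorem already supplies {\rm sfli}$(R)=${\rm sfli}$(R^{op})$ together with {\rm silp}$(R)\leq${\rm spli}$(R)$, so the only thing left is the reverse inequality {\rm spli}$(R)\leq${\rm silp}$(R)$. I would first reduce this to a finiteness statement. Recall that $R$ is left Gorenstein regular exactly when {\rm spli}$(R)$ and {\rm silp}$(R)$ are both finite, and in that case the two invariants coincide. Combined with {\rm silp}$(R)\leq${\rm spli}$(R)$, this already shows that if {\rm silp}$(R)=\infty$ then {\rm spli}$(R)=\infty$, so the equality is automatic there. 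Hence everything comes down to the single implication: if {\rm silp}$(R)<\infty$, then {\rm spli}$(R)<\infty$; the Gorenstein regular characterization then forces {\rm spli}$(R)=${\rm silp}$(R)$.

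The engine for the finite case is the character-module duality $\fd_R(M)=\id_{R^{op}}(M^{+})$, valid for every left $R$-module $M$ (together with its symmetric counterpart), where $M^{+}=\Hom_{\Z}(M,\mathbb{Q}/\mathbb{Z})$. The hypothesis says that $\fd_R(E^{+})<\infty$ for every injective $R^{op}$-module $E$, and this is the natural relaxation of the coherence assumption in the theorem of Ballas and Chatzistavridis: over a right coherent ring each such $E^{+}$ is flat, whereas here it is only assumed to have finite flat dimension. Because the hypothesis concerns $R^{op}$-injectives, I would route it back to $R$ through the equality {\rm sfli}$(R)=${\rm sfli}$(R^{op})$ from Theorem \ref{ThA}(ii); by duality the right-hand side equals $\sup\{\id_R(E^{+}) : E \text{ an injective } R^{op}\text{-module}\}$. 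The goal of this step is to promote the finite flat dimension of each $E^{+}$ to a finite injective dimension, using {\rm silp}$(R)<\infty$, and thereby to conclude {\rm sfli}$(R)<\infty$.

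Once {\rm sfli}$(R)<\infty$ is in hand, an arbitrary injective $R$-module $I$ admits a flat resolution of length at most {\rm sfli}$(R)$. Bounding the projective dimension of the flat terms of this resolution by {\rm silp}$(R)$ and then shifting dimensions would yield $\pd_R(I)\leq{\rm sfli}(R)+{\rm silp}(R)<\infty$, and hence {\rm spli}$(R)<\infty$, which closes the reduction. Thus the whole argument hinges on two places where finite flat dimension must be converted into finite injective, respectively projective, dimension: upgrading $\fd_R(E^{+})<\infty$ to $\id_R(E^{+})<\infty$, and bounding $\pd_R(F)$ for flat $F$ by {\rm silp}$(R)$.

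These two conversions are exactly where I expect the main obstacle to lie. Over a coherent ring they are immediate from the exact flat-to-injective character duality and a Baer-type test on finitely presented modules; in the present generality there is no finite-presentation hypothesis available, and it is precisely the finiteness of the flat dimension of the modules $E^{+}$ that must play the role coherence played before. In particular one must verify that the relevant dimension-shifting estimates, and the passage from ``finite for each $E$'' to ``uniformly bounded'', survive without a coherence assumption; I anticipate that this is where the bulk of the work sits and where the earlier structural propositions of the paper, in conjunction with {\rm silp}$(R)<\infty$, will have to be invoked.
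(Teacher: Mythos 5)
Your proposal is correct, and its core coincides with the paper's own argument: the paper likewise reduces to the case ${\rm silp}(R)<\infty$ (by contradiction rather than by your explicit case split through the left Gorenstein regular characterization), and your ``promotion'' step is exactly Lemma \ref{lem: lemactac120701-11}, which upgrades $\fd_R(E^+)<\infty$ to $\pd_R(E^+)\leq {\rm silp}(R)$ via Jensen's theorem together with Emmanouil's bound on the finitistic projective dimension, then to $\id_R(E^+)\leq {\rm silp}(R)$, giving ${\rm sfli}(R^{op})\leq {\rm silp}(R)$; Theorem \ref{ThA}(ii) then transfers this to ${\rm sfli}(R)$, just as you propose. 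The two ``conversions'' you flag as the main obstacles are indeed the crux, and they do go through without any coherence hypothesis --- that is precisely the content of Lemma \ref{lem: lemactac120701-11}, resting on the facts that ${\rm silp}(R)<\infty$ bounds both the finitistic projective dimension (hence, by Jensen, the projective dimension of every flat module) and the injective dimension of every module of finite projective dimension. Where you genuinely diverge is the concluding step: the paper observes that ${\rm sfli}(R)<\infty$ and ${\rm silp}(R)<\infty$ force conditions (1) and (2) to hold simultaneously (Proposition \ref{lem:lem1501}), hence to be (trivially) equivalent, and then quotes Theorem \ref{ThA}(i) to obtain ${\rm spli}(R)={\rm silp}(R)$; you instead bound ${\rm spli}(R)\leq {\rm sfli}(R)+{\rm silp}(R)$ directly, since the ${\rm sfli}(R)$-th syzygy of an injective $R$-module is flat and flat modules have projective dimension at most ${\rm silp}(R)$. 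Your ending is more elementary and bypasses the totally acyclic machinery of Theorem \ref{ThA}(i) entirely; it does require the cited characterization that finiteness of both invariants forces their equality, but that is also what underlies the paper's appeal to Theorem \ref{ThA}(i), so both routes are sound.
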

Recall that a ring is called \emph{left coherent} if every finitely generated left ideal is finitely presented. Thanks to Example \ref{em: rem2060501-2}, there exists a ring $R$ satisfying that: (a) The character module of every injective $R^{op}$-module has finite flat dimension; (b) Conditions ${\rm (2)}$ and ${\rm (2)}^{op}$ are equivalent; (c) $R$ is neither isomorphic to $R^{op}$ nor right coherent. Consequently, Corollary \ref{cor:corcothac1} improves \cite[Corollary 3.4]{B-O-cohe}, where the equality {\rm spli}$(R)$ $=$ {\rm silp}$(R)$ holds under the assumptions that $R$ is a left and right coherent ring and $R$ is isomorphic to the opposite ring $R^{op}$ (see Corollary \ref{corollary:2.8}(1)).

On the other hand, recall that a ring $R$ is  called \emph{left  generalized coherent} if all level  $R^{op}$-modules have finite flat dimension (see \cite{PJwandGYang}), where
an $R$-module $N$ is called \emph{level} if $\textrm{Tor}_{1}^{R}(M,N)=0$ for any $R^{op}$-module $M$ with a projective resolution by finitely generated projectives (see \cite{BGGIAC}). According to \cite[Theorem 2.12]{BGGIAC}, the character module of every injective $R$-module is a level $R^{op}$-module. Then, for any commutative generalized coherent ring $R$, conditions ${\rm (2)}$ and ${\rm (2)}^{op}$ are equivalent and the character module of every injective $R$-module has finite flat dimension. Hence Corollary \ref{cor:corcothac1} improves \cite[Corollary 3.16]{PJwandGYang}, where the equality {\rm spli}$(R)$ $=$ {\rm silp}$(R)$ holds under the assumption that $R$ is a commutative generalized coherent ring (see Corollary \ref{corollary:2.8}(2)).

Our final theorem is a non-commutative version of Theorem \ref{thm:las}.

\begin{thm} \label{cor: thmeq2-1} Let $R$ be a left and right Noetherian ring and $0\ra R \ra E^0(R)\ra \cdots \ra E^{i}(R)\ra \cdots$  the minimal injective coresolution of  the $R$-module $R$.  Assume that there is a nonnegative integer $n$ such that the flat dimension of $E^i(R)$ is at most $n$  for all $i$ $\geq$ $0$. Then the following are equivalent:
\begin{enumerate}
\item[(i)] $R$ is Iwanaga-Gorenstein.
\item[(ii)] All acyclic complexes of injective $R$-modules or $R^{op}$-modules are totally acyclic.

\item[(iii)] All acyclic complexes of projective $R$-modules or $R^{op}$-modules are totally acyclic and all Gorenstein projective $R$-modules or $R^{op}$-modules are Gorenstein flat.

\item[(iv)] All acyclic complexes of projective $R$-modules or $R^{op}$-modules are $F$-totally acyclic.

\item[(v)] All acyclic complexes of flat $R$-modules or $R^{op}$-modules are $F$-totally acyclic.

\item[(vi)]  All acyclic complexes of injective $R$-modules are totally acyclic and all acyclic complexes of flat $R$-modules are $F$-totally acyclic.
\end{enumerate}
\end{thm}
We observe that conditions $(iv)$ and $(v)$ in Theorem \ref{cor: thmeq2-1} are equivalent for any ring $R$. This fact was established by Christensen-Kato in \cite[Claim 2.2]{LWchrKatoG} (see also Lemma \ref{lem: lemaftac-3}$(i)$). If $R$ is a left and right Noetherian ring of finite finitistic flat dimension that satisfies the Auslander condition, then the equivalence of conditions $(i)$ and $(ii)$ in Theorem \ref{cor: thmeq2-1} has been proved by Estrada-Fu-Iacob in \cite[Theorem 7]{EHuATotal}. Recall that  $R$ is said to satisfy the \emph{Auslander condition} if the flat dimension of the $i$th term in a minimal injective coresolution of the $R$-module $R$ is at most $i-1$ for all $i\geq1$. Furthermore, $R$ has \emph{finite finitistic flat dimension} if the maximum of flat dimensions among the modules with finite flat dimension is finite.

Let $R$ be the path algebra of the quiver $\xymatrix{\bullet &\ar[l]{\bullet}\ar[r]&{\bullet}}$ over a field. Then $R$ is a left and right Noetherian ring such that it is also hereditary. Consequently, $R$ is an Iwanaga-Gorenstein ring and therefore satisfies the hypothesis of Theorem \ref{cor: thmeq2-1} by \cite[Theorem 9.1.10]{EJ}. However, $R$ does not satisfy the Auslander condition (see \cite[p.12]{ARk-G}). This implies that $R$ does not satisfy the hypothesis of Theorem 7 in \cite{EHuATotal}. Thus, our Theorem \ref{cor: thmeq2-1} improves \cite[Theorem 7]{EHuATotal} by adding more equivalent characterizations under a general assumption.

Recall that a finite-dimensional algebra $R$ has \emph{infinite dominant dimension} if in a minimal injective coresolution $0\ra R \ra E^0(R)\ra \cdots \ra E^{i}(R)\ra \cdots$ of the $R$-module $R$, all the $E^{i}(R)$ are projective. In the representation theory of algebras, the long-standing and not yet solved Nakayama conjecture says that a finite-dimensional algebra over a field with infinite dominant dimension is self-injective \cite{Nakayama} (see also \cite[Conjecture (8), p. 41]{ARS}).

Applying Theorem \ref{cor: thmeq2-1} to finite-dimensional algebras, our characterizations of the Nakayama conjecture read as follows.

\begin{cor} {\rm(Corollary \ref{cor:2.10})}\label{cor:1.5} Let $R$ be a finite-dimensional algebra over a field with infinite dominant dimension. Then the following are equivalent:
\begin{enumerate}
\item[(i)] $R$ is self-injective.
\item[(ii)] All acyclic complexes of injective $R$-modules or $R^{op}$-modules are totally acyclic.

\item[(iii)] All acyclic complexes of projective $R$-modules or $R^{op}$-modules are totally acyclic.
\end{enumerate}
\end{cor}

The proofs of the above results will be carried out in the next section.

\section{\bf Proofs of the results}
Throughout the paper, $R$ denotes an associative unital ring. By an $R$-module we
mean a left $R$-module; right $R$-modules are modules over the opposite ring $R^{op}$. Let $M$ be an $R$-module. We use pd$_R M$ (resp., id$_R M$ and fd$_R M$) to denote the projective (resp., injective and flat) dimension of $M$.
Chain complexes of $R$-modules,
 are indexed homologically. That is, the degree $n$ component of
an $R$-complex $X^\bullet$ is denoted $X_n$, and we denote the degree $n$ cycle by $Z_{n}(X^\bullet)$. For an $R$-module $M$ and a complex $X^\bullet$ of $R$-modules, $M^+$ is the character module Hom$_\mathbb{Z}(M,\mathbb{Q}/\mathbb{Z})$  and ${X^\bullet}^+$ is the complex Hom$_\mathbb{Z}(X^\bullet, \mathbb{Q}/\mathbb{Z})$. If there is an acyclic complex  $$\xymatrix{\cdots\ar[r]&P_{i}\ar[r]^{f_{i}}&\cdots\ar[r]&P_{0}\ar[r]^{f_{0}}&M\ar[r]&0}$$ of $R$-modules with each $P_i$ projective, then $\im{(f_{n})}$ is called an \emph{$n$-th syzygy} of $M$. We denote it by $\Omega^{n}(M)$ $($$\Omega^{0}(M)$ $=$ $M$$)$. Dually, one can define an \emph{$n$-th cosyzygy}  $\Omega^{-n}(M)$ of $M$. For the unexplained notations, the reader is referred to \cite{LchrFoxHol,EJ}.

Next, we fix the following notations throughout the paper.

$\mathbf{K}_{\rm ac}({\rm Prj}R)$ (resp., $\mathbf{K}_{\rm ac}({\rm Prj}R^{op})$) is the  homotopy category of the acyclic complexes of projective  $R$-modules (resp., $R^{op}$-modules);

$\mathbf{K}_{\rm ac}({\rm Inj}R)$ (resp., $\mathbf{K}_{\rm ac}({\rm Inj}R^{op})$) is the homotopy category of the acyclic complexes of injective  $R$-modules (resp., $R^{op}$-modules);

$\mathbf{K}_{\rm tac}({\rm Prj}R)$ (resp., $\mathbf{K}_{\rm tac}({\rm Prj}R^{op})$) is the  homotopy category of the totally acyclic complexes of projective  $R$-modules (resp., $R^{op}$-modules);

$\mathbf{K}_{\rm tac}({\rm Inj}R)$ (resp., $\mathbf{K}_{\rm tac}({\rm Inj}R^{op})$) is the homotopy category of the totally acyclic complexes of injective  $R$-modules (resp., $R^{op}$-modules);

$\mathbf{K}_{\rm ac}({\rm Flat}R)$ (resp., $\mathbf{K}_{\rm ac}({\rm Flat}R^{op})$) is the  homotopy category of the acyclic complexes of flat  $R$-modules (resp., $R^{op}$-modules);

$\mathbf{K}_{\rm tac}({\rm Flat}R)$ (resp., $\mathbf{K}_{\rm tac}({\rm Flat}R^{op})$) is the  homotopy category of the F-totally acyclic complexes of flat  $R$-modules (resp., $R^{op}$-modules).

For simplicity, we can rewrite the conditions (1)-(3) and $(1)^{op}$-$(3)^{op}$ in the introduction as follows.

\begin{enumerate}

\item[{{($i$)}}] Condition {\rm(1)}: $\mathbf{K}_{\rm ac}({\rm Prj}R)$ $=$ $\mathbf{K}_{\rm tac}({\rm Prj}R)$.
\item[{{($ii$)}}] Condition {\rm(2)}: $\mathbf{K}_{\rm ac}({\rm Inj}R)$ $=$ $\mathbf{K}_{\rm tac}({\rm Inj}R)$.
\item[{{($iii$)}}] Condition {\rm(3)}: $\mathbf{K}_{\rm ac}({\rm Flat}R)$ $=$ $\mathbf{K}_{\rm tac}({\rm Flat}R)$.
\item[($iv$)] Condition {\rm(1)$^{op}$}: $\mathbf{K}_{\rm ac}({\rm Prj}R^{op})$ $=$ $\mathbf{K}_{\rm tac}({\rm Prj}R^{op})$.
\item[($v$)] Condition {\rm(2)$^{op}$}: $\mathbf{K}_{\rm ac}({\rm Inj}R^{op})$ $=$ $\mathbf{K}_{\rm tac}({\rm Inj}R^{op})$.
\item[($vi$)] Condition {\rm(3)$^{op}$}: $\mathbf{K}_{\rm ac}({\rm Flat}R^{op})$ $=$ $\mathbf{K}_{\rm tac}({\rm Flat}R^{op})$.
\end{enumerate}

The following lemma is essentially taken from the proof of \cite[Claims 2.2 and 2.3]{LWchrKatoG}, where a variation of it appears. The proof given there carries over to the present situation.

\begin{lem} \label{lem: lemaftac-3} The following conditions are true for any ring $R$:
\begin{enumerate}

\item[{(i)}] {\rm(\cite[Claim 2.2]{LWchrKatoG} )} $\mathbf{K}_{\rm ac}({\rm Prj}R)$  $\subseteq$ $\mathbf{K}_{\rm tac}({\rm Flat}R)$ if and only if $\mathbf{K}_{\rm ac}({\rm Flat}R)$ $=$ $\mathbf{K}_{\rm tac}({\rm Flat}R)$.
   \item[{(ii)}] {\rm(\cite[Claim 2.3]{LWchrKatoG})}  If $\mathbf{K}_{\rm ac}({\rm Inj}R^{op})$ $=$ $\mathbf{K}_{\rm tac}({\rm Inj}R^{op})$, then $\mathbf{K}_{\rm ac}({\rm Flat}R)$ $=$ $\mathbf{K}_{\rm tac}({\rm Flat}R)$.

\end{enumerate}
\end{lem}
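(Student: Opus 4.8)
The plan is to treat the two parts separately: part (ii) admits a short and complete argument via character duality, whereas the real content of part (i) is an approximation result which I will isolate as the main obstacle.

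\emph{Part (i).} The implication ``$\Leftarrow$'' is immediate: every projective module is flat, so $\mathbf{K}_{\rm ac}({\rm Prj}R)\subseteq\mathbf{K}_{\rm ac}({\rm Flat}R)$, and if the latter equals $\mathbf{K}_{\rm tac}({\rm Flat}R)$ then $\mathbf{K}_{\rm ac}({\rm Prj}R)\subseteq\mathbf{K}_{\rm tac}({\rm Flat}R)$. For ``$\Rightarrow$'', since $\mathbf{K}_{\rm tac}({\rm Flat}R)\subseteq\mathbf{K}_{\rm ac}({\rm Flat}R)$ holds trivially, it suffices to show that every acyclic complex $F^\bullet$ of flat $R$-modules is $F$-totally acyclic, that is, that $E\otimes_R F^\bullet$ is acyclic for each injective $R^{op}$-module $E$. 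My plan is to approximate $F^\bullet$ by acyclic complexes of projectives: I would produce a filtered system $(P_\alpha^\bullet)$ of acyclic complexes of finitely generated projective $R$-modules together with an isomorphism $F^\bullet\cong\varinjlim_\alpha P_\alpha^\bullet$. Granting this, the argument closes quickly. The functor $E\otimes_R-$ is a left adjoint, hence commutes with the filtered colimit, so $E\otimes_R F^\bullet\cong\varinjlim_\alpha(E\otimes_R P_\alpha^\bullet)$; each $P_\alpha^\bullet$ lies in $\mathbf{K}_{\rm ac}({\rm Prj}R)\subseteq\mathbf{K}_{\rm tac}({\rm Flat}R)$ by hypothesis, so each $E\otimes_R P_\alpha^\bullet$ is acyclic; and a filtered colimit of acyclic complexes is acyclic because homology commutes with filtered colimits. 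Therefore $E\otimes_R F^\bullet$ is acyclic, as wanted.

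The main obstacle is exactly this approximation step. Lazard's theorem writes each flat term $F_n$ as a filtered colimit of finitely generated free modules, but this does not immediately yield a colimit presentation of the \emph{complex} $F^\bullet$ by \emph{acyclic} subcomplexes, since a finitely generated free subcomplex of an acyclic complex need not be acyclic. The fix I envisage is a back-and-forth construction over a filtered poset in which, at each finite stage, whenever a cycle $x$ of a chosen finitely generated subcomplex occurs, one enlarges the system so as to absorb a witness $y$ with $dy=x$; such a $y$ exists by acyclicity of $F^\bullet$ and lives in a finitely generated free stage because the relevant $F_{n+1}$ is flat. Arranging the enlargements so that every stage is an acyclic complex of finitely generated projectives with colimit $F^\bullet$ is the technical heart of the matter, and this is precisely where the proof of \cite[Claim 2.2]{LWchrKatoG} is invoked and checked to carry over to the present generality.

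\emph{Part (ii).} Here I would argue by character duality. Let $F^\bullet$ be an acyclic complex of flat $R$-modules. Since the character module of a flat left $R$-module is an injective $R^{op}$-module, ${F^\bullet}^+=\Hom_\mathbb{Z}(F^\bullet,\mathbb{Q}/\mathbb{Z})$ is a complex of injective $R^{op}$-modules; and because $(-)^+$ is exact and faithful, ${F^\bullet}^+$ is acyclic, so ${F^\bullet}^+\in\mathbf{K}_{\rm ac}({\rm Inj}R^{op})$. Invoking the hypothesis $\mathbf{K}_{\rm ac}({\rm Inj}R^{op})=\mathbf{K}_{\rm tac}({\rm Inj}R^{op})$, the complex ${F^\bullet}^+$ is totally acyclic, that is, $\Hom_{R^{op}}(E,{F^\bullet}^+)$ is acyclic for every injective $R^{op}$-module $E$. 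Finally I would use the natural isomorphism $(E\otimes_R F^\bullet)^+\cong\Hom_{R^{op}}(E,{F^\bullet}^+)$ arising from the tensor-hom adjunction; its right-hand side is acyclic by the previous sentence, so $(E\otimes_R F^\bullet)^+$ is acyclic, and since $(-)^+$ reflects acyclicity, $E\otimes_R F^\bullet$ is acyclic. Thus every acyclic complex of flat modules is $F$-totally acyclic, giving $\mathbf{K}_{\rm ac}({\rm Flat}R)\subseteq\mathbf{K}_{\rm tac}({\rm Flat}R)$; the reverse inclusion is trivial, so the two coincide. I expect part (ii) to be routine once the adjunction isomorphism is set up with the correct handedness.
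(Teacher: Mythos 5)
Your part (ii) is correct and is the intended argument: $(F^\bullet)^+$ is an acyclic complex of injective $R^{op}$-modules, the hypothesis makes it totally acyclic, and the adjunction isomorphism $(E\otimes_R F^\bullet)^+\cong\Hom_{R^{op}}(E,(F^\bullet)^+)$ together with faithful exactness of $(-)^+$ finishes it. The backward implication in part (i) is also fine. The problem is the forward implication in part (i): the approximation statement you rest it on --- every acyclic complex of flat modules is a filtered colimit of \emph{acyclic} complexes of \emph{finitely generated} projectives --- is not merely unproved in your write-up; it is false. Take $R=k[x,y]/(x^2,xy,y^2)$, an Artinian local ring with $\mathfrak{m}^2=0$ that is not Gorenstein. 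A minimal acyclic complex of finitely generated free $R$-modules must satisfy $\ker d_n=\mathfrak{m}C_n$ and $\mathrm{im}\,d_{n+1}=\mathfrak{m}C_n$, forcing the ranks to satisfy $r_{n+1}=2r_n$ for all $n$, hence $r_n=0$; so every acyclic complex of finitely generated projectives over $R$ is contractible, and a filtered colimit of such complexes is $F$-totally acyclic (tensor and homology commute with filtered colimits). But since $R$ is local, Noetherian and not Gorenstein, Theorem 1.1 of the paper gives an acyclic complex of flat (indeed free) $R$-modules that is \emph{not} $F$-totally acyclic, and this complex therefore admits no such colimit presentation. So the ``back-and-forth'' you sketch cannot be made to work: closing off cycles forces you out of the finitely generated (and, as this example shows, out of the realizable) range.

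This is also not what the cited proof of \cite[Claim 2.2]{LWchrKatoG} does, so ``checking it carries over'' does not repair the gap. The actual argument uses Neeman's theorem on the homotopy category of flat modules: for any complex $F^\bullet$ of flat modules there is a triangle $P^\bullet\to F^\bullet\to C^\bullet\to \Sigma P^\bullet$ in $\mathbf{K}({\rm Flat}R)$ with $P^\bullet$ a complex of projectives and $C^\bullet$ pure acyclic (in particular $E\otimes_R C^\bullet$ is acyclic for every $R^{op}$-module $E$, since tensoring preserves pure exactness). If $F^\bullet$ is acyclic, the triangle forces $P^\bullet$ to be acyclic, so $P^\bullet\in\mathbf{K}_{\rm ac}({\rm Prj}R)\subseteq\mathbf{K}_{\rm tac}({\rm Flat}R)$ by hypothesis, whence $E\otimes_R P^\bullet$ is acyclic; the triangle $E\otimes_R P^\bullet\to E\otimes_R F^\bullet\to E\otimes_R C^\bullet$ then shows $E\otimes_R F^\bullet$ is acyclic. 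Replacing your colimit approximation by this triangle is the missing idea.
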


The following lemma has been proved by $\check{\rm{S}}$t'ov\'{\i}$\check{\rm{c}}$ek in \cite[Corollary 5.9, p.23]{Onpurityand}. One can obtain  a different proof by using Theorem 5.1 in \cite{Periodic} and dimension shifting.
\begin{lem}\label{lem:lem080205101} Let $R$ be a ring. If $M$ is a cycle of an acyclic complex of injective  $R$-modules, then $\Ext^1_R(F,M)$ $=$ $0$ for each $R$-module $F$ with finite flat dimension.
\end{lem}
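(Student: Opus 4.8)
The plan is to reduce the assertion to the case of a flat module and then propagate it to modules of finite flat dimension by a dimension-shifting argument that exploits the complex itself. Write the given acyclic complex of injectives as $\cdots \to I_{k+1}\to I_{k}\to I_{k-1}\to \cdots$ with cycles $Z_k=Z_k(\text{-})$, so that $M=Z_n$ for some $n$ and there are short exact sequences $0\to Z_{k+1}\to I_{k+1}\to Z_k\to 0$ for every $k$. The heart of the matter is the flat case, so I first aim to show that every cycle $Z_k$ is cotorsion, i.e. that $\Ext^1_R(G,Z_k)=0$ for every flat $R$-module $G$.

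To obtain this I would pass from a cycle to a genuinely periodic module. Taking the direct product over $k\in\mathbb{Z}$ of the sequences $0\to Z_{k+1}\to I_{k+1}\to Z_k\to 0$ (products are exact) and reindexing yields a short exact sequence $0\to Z\to \mathbb{I}\to Z\to 0$, where $Z=\prod_{k\in\mathbb{Z}}Z_k$ and $\mathbb{I}=\prod_{k\in\mathbb{Z}}I_k$ is injective; thus $Z$ is injective-periodic, and each $Z_k$ (in particular $M$) is a direct summand of $Z$. Applying \cite[Theorem 5.1]{Periodic} to $Z$ shows that $Z$ is cotorsion, and since the class of cotorsion modules is closed under direct summands, every cycle $Z_k$ is cotorsion. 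This settles the flat case for all cycles of the complex at once.

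It remains to treat $F$ with $\fd_R F=d<\infty$, and here the point is that dimension shifting on $F$ alone only reaches $\Ext^{\geq 2}$, so I would first raise the cohomological degree using the complex. Since the sequences $0\to Z_{k+1}\to I_{k+1}\to Z_k\to 0$ have injective middle term, the connecting maps furnish isomorphisms $\Ext^i_R(F,Z_k)\cong \Ext^{i+1}_R(F,Z_{k+1})$ for all $i\geq 1$; chaining these from $k=n$ to $k=n+d-1$ gives $\Ext^1_R(F,M)\cong \Ext^{d+1}_R(F,Z_{n+d})$. On the other hand, as $\fd_R F=d$, the $d$-th syzygy $\Omega^{d}F$ in a projective resolution of $F$ is flat, so the usual shift yields $\Ext^{d+1}_R(F,Z_{n+d})\cong \Ext^1_R(\Omega^{d}F,Z_{n+d})$, which vanishes because $\Omega^{d}F$ is flat and $Z_{n+d}$ is cotorsion. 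Hence $\Ext^1_R(F,M)=0$, as desired. The main obstacle is precisely the flat/periodic base case: the passage to a periodic module and the resulting cotorsion-ness of the cycles is the nontrivial input supplied by \cite[Theorem 5.1]{Periodic}, whereas the remaining degree-raising along the complex together with the flat-dimension shift on $F$ is purely formal; this is the route indicated by ``\cite[Theorem 5.1]{Periodic} and dimension shifting''.
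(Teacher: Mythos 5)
Your argument is correct and is exactly the route the paper itself indicates as an alternative proof: deduce from \cite[Theorem 5.1]{Periodic}, via the product/periodicity trick, that all cycles of an acyclic complex of injectives are cotorsion, and then combine the degree-raising along the complex with the syzygy shift on $F$. The paper's own justification is simply a citation of \cite[Corollary 5.9]{Onpurityand}, so your write-up supplies the details left implicit; in particular, you correctly observed that dimension shifting on $F$ alone only controls $\Ext^{d+1}_R(F,-)$, so one must first climb $d$ steps along the complex before applying the cotorsion property of the cycle $Z_{n+d}$.
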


Using \cite[Theorem 5.6]{JJarxiv} and the proof of \cite[Lemma 1]{IcaEnocCov}, one can obtain the following result.

\begin{lem}\label{lem:lemongif-1} Let $R$ be a left Noetherain ring. Assume that the character module of every Gorenstein
injective $R$-module is a Gorenstein flat $R^{op}$-module. If $M$ is an $R$-module such that the character module $M^+$ is a Gorenstein flat $R^{op}$-module, then $M$ is Gorenstein injective.
\end{lem}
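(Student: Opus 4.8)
The plan is to run a dimension-shifting argument that builds, out of the hypothesis on $M^+$, a complete injective resolution of $M$. First I would recall what must be produced: to show $M$ is Gorenstein injective I need an acyclic complex of injective $R$-modules $I^\bullet$ with $M = Z_0(I^\bullet)$ that stays acyclic after applying $\Hom_R(E, -)$ for every injective $R$-module $E$. The left half of such a complex is cheap: since $R$ is left Noetherian, $M$ has an injective coresolution, and I would choose a \emph{minimal} one, so each cosyzygy $\Omega^{-n}M$ is again a module whose character module I can control. The construction of the left half (a projective resolution of $M^+$ over $R^{op}$, which by Noetherianness can be taken by finitely generated projectives, then dualized) is where I would lean on the method of \cite[Lemma 1]{IcaEnocCov}; the structural input from \cite[Theorem 5.6]{JJarxiv} is that over a left Noetherian ring the character module operation interacts well with Gorenstein injectivity/flatness, letting me transfer the Gorenstein flat property between $M^+$ and the relevant cosyzygies.

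The key steps, in order, would be: (1) Using that $M^+$ is Gorenstein flat, produce a totally acyclic complex of flat $R^{op}$-modules with $M^+$ as a cycle, i.e.\ the ``right half'' data encoded on the character side. (2) Dualize back: apply the relation between Gorenstein flat $R^{op}$-modules and Gorenstein injective $R$-modules furnished by \cite[Theorem 5.6]{JJarxiv} together with the standing hypothesis (character modules of Gorenstein injectives are Gorenstein flat), to recognize that the modules appearing are injective and that exactness is preserved. (3) Splice the injective coresolution of $M$ (the left half) with the complex obtained by dualizing the flat resolution of $M^+$ (the right half) at $M$, obtaining an acyclic complex $I^\bullet$ of injectives with $M = Z_0(I^\bullet)$. (4) Verify total acyclicity: for an arbitrary injective $R$-module $E$, show $\Hom_R(E, I^\bullet)$ is exact. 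Here I would reduce the $\Hom$-exactness to an $\Ext$-vanishing statement at each cycle and then invoke Lemma \ref{lem:lem080205101}: the cycles of the right half are cosyzygies of $M$, and the injectives $E$ have finite (indeed zero) flat dimension, so $\Ext^1_R(E, Z_n)$ vanishes, giving the required exactness.

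The main obstacle I expect is step (2)–(3): correctly matching up the totally acyclic flat complex on the $R^{op}$ side with an honest acyclic complex of \emph{injective} $R$-modules after dualizing, and ensuring the cycles line up so that $M$ itself sits as $Z_0$. The character-module functor $(-)^+$ is exact and faithful, which helps, but it sends flats to injectives (over the opposite ring) only one way, so I must use the left Noetherian hypothesis to pass back from injective $R^{op}$-modules to flat $R$-modules and then again to injective $R$-modules; this is exactly the content I would extract from \cite[Theorem 5.6]{JJarxiv}. The verification of total acyclicity (step 4) is comparatively routine once Lemma \ref{lem:lem080205101} is in hand, since that lemma is tailored to kill precisely the $\Ext^1$ groups that obstruct $\Hom_R(E,-)$-exactness for finite-flat-dimensional $E$, and injective modules trivially have finite flat dimension.
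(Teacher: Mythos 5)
There is a genuine gap, and it sits exactly where the hypotheses of the lemma have to do their work. In your step (4) you verify total acyclicity by applying Lemma \ref{lem:lem080205101} to an injective module $E$, on the grounds that ``injective modules trivially have finite (indeed zero) flat dimension.'' This is false: injective modules have injective dimension zero, not flat dimension zero, and in general an injective module need not have finite flat dimension at all. The supremum of these flat dimensions is precisely the invariant ${\rm sfli}(R)$ studied in this paper, and it is infinite even for commutative Artinian rings such as $R=k[x,y]/(x^{2},xy,y^{2})$, where the injective envelope of the residue field is a non-free finitely generated module and hence has infinite flat dimension. If your step (4) were valid, it would show --- with no use of the hypothesis on $M^{+}$ nor of the standing assumption on character modules of Gorenstein injective modules --- that every cycle of every acyclic complex of injective modules is Gorenstein injective, i.e.\ that condition (2) holds over every ring; this contradicts Example \ref{em: rem2060501}(v). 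The $\Ext$-vanishing $\Ext^{1}_R(E,Z_n)=0$ must instead be extracted from the Gorenstein flatness of $M^{+}$, for instance via the duality $\Tor^{R}_{i}(M^{+},E)\cong\bigl(\Ext^{i}_{R}(E,M)\bigr)^{+}$ available for finitely generated $E$ over the left Noetherian ring $R$, together with the $\Tor$-orthogonality of Gorenstein flat right modules to injective left modules.

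A second, related problem is the splicing in steps (2)--(3), which you flag but do not resolve: dualizing a flat resolution of $M^{+}$ produces a complex of injective $R$-modules with $M^{++}$, not $M$, among its cycles, and $M$ is only a pure submodule of $M^{++}$; Gorenstein injectivity does not descend along pure monomorphisms in general, so ``the cycles line up'' is not automatic. The intended argument --- the reason the paper cites \cite[Theorem 5.6]{JJarxiv} --- avoids building the whole totally acyclic complex: that theorem reduces Gorenstein injectivity to $\Ext$-orthogonality against injectives together with a left resolution $\cdots\to E_{1}\to E_{0}\to M\to 0$ built from injective precovers, and it is in checking the $\Hom_R(E,-)$-exactness of that left half (as in the proof of \cite[Lemma 1]{IcaEnocCov}) that the hypothesis that character modules of Gorenstein injective modules are Gorenstein flat is actually consumed. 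As written, your plan never uses that hypothesis, which is a reliable sign that the argument cannot close.
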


Following \cite{JJarxiv}, an $R$-module is called \emph{projectively coresolved Gorenstein flat} if it is a cycle of an $F$-totally acyclic complex of projective $R$-modules.

\begin{prop} \label{prop: prop0801} Let $R$ be a  ring.  Consider the following conditions:
\begin{enumerate}
\item[{(i)}] $\mathbf{K}_{\rm ac}({\rm Prj}R)$ $=$ $\mathbf{K}_{\rm tac}({\rm Prj}R)$ $\subseteq$ $\mathbf{K}_{\rm tac}({\rm Flat}R)$.
\item[{(ii)}] $\mathbf{K}_{\rm ac}({\rm Flat}R)$ $=$  $\mathbf{K}_{\rm tac}({\rm Flat}R)$.
\item[{(iii)}] $\mathbf{K}_{\rm ac}({\rm Inj}R^{op})$ $=$  $\mathbf{K}_{\rm tac}({\rm Inj}R^{op})$.
\end{enumerate}

Then $(i)$ $\Leftrightarrow$ $(ii)$ $\Leftarrow$ $(iii)$. Moreover, if $R$ is right Noetherian, then $(i)$ $\Leftrightarrow$ $(ii)$ $\Leftrightarrow$ $(iii)$.
\end{prop}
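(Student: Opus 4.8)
The plan is to prove $(i)\Leftrightarrow(ii)$ and $(iii)\Rightarrow(ii)$ for an arbitrary ring, and then to treat $(ii)\Rightarrow(iii)$ under the right Noetherian hypothesis separately. For $(i)\Rightarrow(ii)$ I note that $(i)$ contains the inclusion $\mathbf{K}_{\rm ac}({\rm Prj}R)\subseteq\mathbf{K}_{\rm tac}({\rm Flat}R)$, so Lemma \ref{lem: lemaftac-3}(i) immediately yields $(ii)$, and for $(iii)\Rightarrow(ii)$ there is nothing to do beyond quoting Lemma \ref{lem: lemaftac-3}(ii). The real content of $(ii)\Rightarrow(i)$ is to upgrade $F$-total acyclicity to total acyclicity for complexes of projectives. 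Lemma \ref{lem: lemaftac-3}(i) turns $(ii)$ into $\mathbf{K}_{\rm ac}({\rm Prj}R)\subseteq\mathbf{K}_{\rm tac}({\rm Flat}R)$, so each $P^\bullet\in\mathbf{K}_{\rm ac}({\rm Prj}R)$ is $F$-totally acyclic and hence every cycle $Z_n(P^\bullet)$ is projectively coresolved Gorenstein flat. By \cite{JJarxiv} every such module is Gorenstein projective, whence $\Ext^1_R(Z_n(P^\bullet),Q)=0$ for all projective $Q$; dimension shifting along the short exact sequences $0\to Z_n(P^\bullet)\to P_n\to Z_{n-1}(P^\bullet)\to 0$ then shows $\Hom_R(P^\bullet,Q)$ is exact, so $P^\bullet\in\mathbf{K}_{\rm tac}({\rm Prj}R)$. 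Combining this equality with the inclusion already produced gives $(i)$.

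For the remaining implication $(ii)\Rightarrow(iii)$ I assume $R$ is right Noetherian, and the engine is the following dualisation. If $I^\bullet$ is any acyclic complex of injective $R^{op}$-modules, then the character complex ${I^\bullet}^+$ is an acyclic complex of left $R$-modules whose terms ${I_n}^+$ are flat, because over a right Noetherian ring the character module of an injective right module is flat: for a finitely generated (hence, over such a ring, finitely presented with a resolution by finitely generated projectives) right ideal $\mathfrak{a}$ one has $\Tor_1^R(R/\mathfrak{a},E^+)\cong\Ext^1_{R^{op}}(R/\mathfrak{a},E)^+=0$. Applying $(ii)$ to ${I^\bullet}^+$ shows it is $F$-totally acyclic, so each of its cycles is Gorenstein flat. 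Since $(-)^+$ is exact and carries cycles to cycles, I conclude that whenever $N$ is a cycle of an acyclic complex of injective $R^{op}$-modules, its character module $N^+$ is a Gorenstein flat left $R$-module.

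With this in hand I would invoke Lemma \ref{lem:lemongif-1} applied to the ring $R^{op}$, which is left Noetherian since $R$ is right Noetherian. Its standing hypothesis, that the character module of every Gorenstein injective $R^{op}$-module is Gorenstein flat, is exactly the special case of the previous paragraph in which $N$ is taken to be Gorenstein injective (such an $N$ is in particular a cycle of an acyclic complex of injectives); so the hypothesis of Lemma \ref{lem:lemongif-1} holds, with no circularity, as its verification used only $(ii)$. For an arbitrary cycle $N$ of an acyclic complex of injective $R^{op}$-modules we then have $N^+$ Gorenstein flat, so the lemma gives that $N$ is Gorenstein injective. Consequently every acyclic complex of injective $R^{op}$-modules has Gorenstein injective cycles, and a dual dimension-shifting argument (using $\Ext^1_{R^{op}}(E,N)=0$ for injective $E$) shows it is totally acyclic, i.e. $\mathbf{K}_{\rm ac}({\rm Inj}R^{op})=\mathbf{K}_{\rm tac}({\rm Inj}R^{op})$, which is $(iii)$.

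The two places I expect to need the most care are the deep external input and the bookkeeping of sides. The hard ingredient is the implication ``projectively coresolved Gorenstein flat $\Rightarrow$ Gorenstein projective'' of \cite{JJarxiv}, on which $(ii)\Rightarrow(i)$ rests: without it there is no elementary passage from $E\otimes_R P^\bullet$ being exact for all injective $E$ to $\Hom_R(P^\bullet,Q)$ being exact for all projective $Q$, since the comparison map $Q^+\otimes_R P_n\to\Hom_R(P_n,Q)^+$ fails to be an isomorphism for non-finitely-generated projectives. The second delicate point is organisational: Lemmas \ref{lem:lem080205101} and \ref{lem:lemongif-1} are stated for the base ring, so throughout $(ii)\Rightarrow(iii)$ every module-theoretic statement must be read over $R^{op}$; one must keep track that ``right Noetherian'' for $R$ is exactly ``left Noetherian'' for $R^{op}$ and that the character module of a right $R$-module is a left $R$-module, so that the Gorenstein flatness produced in the second paragraph feeds correctly into the left-Noetherian hypothesis of Lemma \ref{lem:lemongif-1}.
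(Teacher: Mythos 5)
Your proof is correct and follows essentially the same route as the paper's: Lemma \ref{lem: lemaftac-3} for $(i)\Leftrightarrow(ii)$ and $(iii)\Rightarrow(ii)$, the result of \cite{JJarxiv} on projectively coresolved Gorenstein flat modules for $(ii)\Rightarrow(i)$, and character complexes together with Lemma \ref{lem:lemongif-1} for $(ii)\Rightarrow(iii)$ under the right Noetherian hypothesis. The one point where you go beyond the paper is in explicitly verifying the standing hypothesis of Lemma \ref{lem:lemongif-1} (that character modules of Gorenstein injective $R^{op}$-modules are Gorenstein flat) as the special case of your own computation in which the cycle is Gorenstein injective; the paper invokes that lemma without comment, so this non-circular check is a welcome addition.
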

\begin{proof} $(i)$ $\Rightarrow$ $(ii)$. Note that $\mathbf{K}_{\rm ac}({\rm Prj}R)$ $\subseteq$ $\mathbf{K}_{\rm tac}({\rm Flat}R)$ by $(i)$. It follows from Lemma \ref{lem: lemaftac-3}$(i)$ that $\mathbf{K}_{\rm ac}({\rm Flat}R)$ $=$  $\mathbf{K}_{\rm tac}({\rm Flat}R)$.

$(ii)$ $\Rightarrow$ $(i)$. If ${P^\bullet}$ is a complex  in $\mathbf{K}_{\rm ac}({\rm Prj}R)$, then ${P^\bullet}$ is in $\mathbf{K}_{\rm ac}({\rm Flat}R)$, and therefore ${P^\bullet}$ is in $\mathbf{K}_{\rm tac}({\rm Flat}R)$ by $(ii)$. Hence each cycle of ${P^\bullet}$ is a projectively coresolved Gorenstein flat $R$-module. By \cite[Theorem 4.4]{JJarxiv}, for any $n$-th cycle $M_n$ of the complex ${P^\bullet}$, Ext$_R^1(M_n,P)$ $=$ $0$ for all projective $R$-modules $P$ and all $n$ $\in$ $\mathbb{Z}$. Then  ${P^\bullet}$ is in $\mathbf{K}_{\rm tac}({\rm Prj}R)$, and so $\mathbf{K}_{\rm ac}({\rm Prj}R)$ $=$ $\mathbf{K}_{\rm tac}({\rm Prj}R)$. It is clear that $\mathbf{K}_{\rm tac}({\rm Prj}R)$ $\subseteq$ $\mathbf{K}_{\rm ac}({\rm Flat}R)$. So $\mathbf{K}_{\rm tac}({\rm Prj}R)$ $\subseteq$ $\mathbf{K}_{\rm tac}({\rm Flat}R)$ by $(ii)$, as desired.

$(iii)$ $\Rightarrow$ $(ii)$ follows from Lemma \ref{lem: lemaftac-3}$(ii)$.

$(ii)$ $\Rightarrow$ $(iii)$. Let ${E^\bullet}$ be a complex in $\mathbf{K}_{\rm ac}({\rm Inj}R^{op})$ and $K_n$  the $n$-th cycle of the complex ${E^\bullet}$.  Then ${K_n}^+$ is a cycle of the complex ${E^\bullet}^+$  for all $n$ $\in$ $\mathbb{Z}$.  Note that $R$ is right Noetherian by hypothesis. Then ${E^\bullet}^+$ is in $\mathbf{K}_{\rm ac}({\rm Flat}R)$, and so  ${K_n}^+$ is Gorenstein flat for all $n$ $\in$ $\mathbb{Z}$ by $(ii)$. It follows that the character module of every Gorenstein
injective $R^{op}$-module is a Gorenstein flat $R$-module. By Lemma \ref{lem:lemongif-1}, $K_n$ is Gorenstein injective for all  $n$ $\in$ $\mathbb{Z}$. Thus ${E^\bullet}$ is in $\mathbf{K}_{\rm tac}({\rm Inj}R^{op})$. This completes the proof.
\end{proof}
\begin{rem}\label{rem:1} We note that the equivalence $(i) \Leftrightarrow (ii)$ and the implication $(iii) \Rightarrow (ii)$ in Proposition \ref{prop: prop0801} were already observed in \cite[Remark 2.5]{I and O Total}. For the convenience of the reader, we present a slightly different proof here.
\end{rem}

Without extra assumptions on the ring, we do not know if the equivalence of two conditions (1) and (3) in Theorem \ref{thm:las} is true for a general ring. In general, the relation between these conditions is tied to an unresolved problem in Gorenstein homological algebra. Corollary \ref{cor:2.5} explains how.

\begin{cor}\label{cor:2.5} Let $R$ be a ring. If every Gorenstein projective $R$-module is Gorenstein flat, then
$\mathbf{K}_{\rm ac}({\rm Proj}R)$ $=$ $\mathbf{K}_{\rm tac}({\rm Proj}R)$ if and only if $\mathbf{K}_{\rm ac}({\rm Flat}R)$ $=$ $\mathbf{K}_{\rm tac}({\rm Flat}R)$.
\end{cor}
\begin{proof} Note that $\mathbf{K}_{\rm tac}({\rm Prj}R)$ $\subseteq$ $\mathbf{K}_{\rm tac}({\rm Flat}R)$ by \cite[Theorem 2.2]{I on the fin}. So the result follows from Proposition \ref{prop: prop0801}.
\end{proof}

\begin{prop}\label{lem:lem1501}  The following conditions are true for any ring $R$:
\begin{enumerate}
\item[{(i)}]  If {\rm silp}$(R)$ $<$ $\infty$, then $\mathbf{K}_{\rm ac}({\rm Prj}R)$ $=$  $\mathbf{K}_{\rm tac}({\rm Prj}R)$.
\item[{(ii)}] If {\rm sfli}$(R)$ $<$ $\infty$, then $\mathbf{K}_{\rm ac}({\rm Inj}R)$ $=$  $\mathbf{K}_{\rm tac}({\rm Inj}R)$ and $\mathbf{K}_{\rm ac}({\rm Flat}R^{op})$ $=$  $\mathbf{K}_{\rm tac}({\rm Flat}R^{op})$.
\end{enumerate}
\end{prop}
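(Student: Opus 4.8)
The plan is to handle the two parts independently, in each case reducing the relevant ``totally acyclic'' assertion to the vanishing of a single $\Ext^1$ against suitable test modules, and then exploiting the finiteness hypothesis together with either dimension shifting or the lemmas already in hand.

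For (i), I would first record the standard reformulation: writing $Z_n = Z_n(P^\bullet)$ for the cycles of an acyclic complex $P^\bullet$ of projectives, the complex is totally acyclic exactly when $\Hom_R(P^\bullet,Q)$ is exact for every projective $Q$, and applying $\Hom_R(-,Q)$ to the short exact sequences $0\to Z_n\to P_n\to Z_{n-1}\to 0$ shows this amounts to $\Ext^1_R(Z_{n-1},Q)=0$ for all $n$ and all projective $Q$. Setting $d=\mathrm{silp}(R)<\infty$, every projective $Q$ satisfies $\id_R Q\le d$, so $\Ext^{d+1}_R(-,Q)=0$. Since the $P_n$ are projective, dimension shifting down the sequences $0\to Z_{m}\to P_{m}\to Z_{m-1}\to 0$ yields $\Ext^1_R(Z_{n-1},Q)\cong\Ext^{d+1}_R(Z_{n-1-d},Q)=0$. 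This gives the nontrivial inclusion $\mathbf{K}_{\rm ac}({\rm Prj}R)\subseteq\mathbf{K}_{\rm tac}({\rm Prj}R)$, and the reverse one is automatic.

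For (ii), I would set $d=\mathrm{sfli}(R)<\infty$, so that every injective $R$-module has flat dimension at most $d$, and then obtain the three equalities in turn. First, an acyclic complex $E^\bullet$ of injectives with cycles $Z_n$ is totally acyclic iff $\Ext^1_R(I,Z_n)=0$ for every injective $I$ and all $n$ (the same reformulation via the defining short exact sequences); since each injective $I$ has finite flat dimension, Lemma \ref{lem:lem080205101} supplies precisely this vanishing, giving $\mathbf{K}_{\rm ac}({\rm Inj}R)=\mathbf{K}_{\rm tac}({\rm Inj}R)$. Next I would feed this into Lemma \ref{lem: lemaftac-3}(ii) read over $R^{op}$: since $(R^{op})^{op}=R$, its hypothesis becomes exactly the equality just proved, and its conclusion is $\mathbf{K}_{\rm ac}({\rm Flat}R^{op})=\mathbf{K}_{\rm tac}({\rm Flat}R^{op})$. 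Finally, applying the equivalence $(i)\Leftrightarrow(ii)$ of Proposition \ref{prop: prop0801} to $R^{op}$ converts this flat statement into $\mathbf{K}_{\rm ac}({\rm Prj}R^{op})=\mathbf{K}_{\rm tac}({\rm Prj}R^{op})$.

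I do not expect genuine difficulty here, as the analytic weight is borne by Lemma \ref{lem:lem080205101}, Lemma \ref{lem: lemaftac-3} and Proposition \ref{prop: prop0801}. The main thing to get right is the bookkeeping of left/right and opposite-ring conventions when invoking those results over $R^{op}$: one must verify that $(R^{op})^{op}=R$ makes their hypotheses coincide with the equalities already established, and record that the $(ii)\Rightarrow(i)$ direction of Proposition \ref{prop: prop0801} carries no Noetherian requirement, so the conclusion of (ii) holds for an arbitrary ring $R$.
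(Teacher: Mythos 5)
Your argument is correct and follows essentially the same route as the paper's own proof: part (i) is the same dimension-shifting argument against projectives of finite injective dimension, and part (ii) derives the injective equality from Lemma \ref{lem:lem080205101}, then passes to $\mathbf{K}_{\rm ac}({\rm Flat}R^{op})=\mathbf{K}_{\rm tac}({\rm Flat}R^{op})$ via Lemma \ref{lem: lemaftac-3}(ii) read over $R^{op}$, and finally to the projective statement via the $(ii)\Rightarrow(i)$ implication of Proposition \ref{prop: prop0801}, exactly as in the text. Your explicit remark that this last implication carries no Noetherian hypothesis is a worthwhile clarification but not a different method.
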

\begin{proof} $(i)$ It suffices to show $\mathbf{K}_{\rm ac}({\rm Prj}R)$ $\subseteq$  $\mathbf{K}_{\rm tac}({\rm Prj}R)$. Let $M$ be any projective $R$-module and ${P^\bullet}$ a complex  in $\mathbf{K}_{\rm ac}({\rm Prj}R)$. Note that {\rm silp}$(R)$ $<$ $\infty$ by assumption. Then $M$ has finite injective dimension. For each cycle $N$ of ${P^\bullet}$,  one can check that Ext$^1_R(N,M)$ $=$ $0$ by dimension shifting. Then the sequence Hom$_R({P^\bullet},M)$ is exact. It follows that ${P^\bullet}$ is in $\mathbf{K}_{\rm tac}({\rm Prj}R)$, as desired.

$(ii)$  Assume that ${E^\bullet}$ is a complex  in $\mathbf{K}_{\rm ac}({\rm Inj}R)$ and $M$ is a cycle of ${E^\bullet}$. Since {\rm sfli}$(R)$ $<$ $\infty$ by assumption, $\Ext^1_R(E,M)$ $=$ $0$ for every injective $R$-module $E$ by Lemma \ref{lem:lem080205101}. It follows that Hom$_R(E,{E^\bullet})$ is an exact sequence for each injective $R$-module $E$.  Thus ${E^\bullet}$ is in $\mathbf{K}_{\rm tac}({\rm Inj}R)$, and hence $\mathbf{K}_{\rm ac}({\rm Inj}R)$ $=$  $\mathbf{K}_{\rm tac}({\rm Inj}R)$. So Proposition \ref{prop: prop0801} yields $\mathbf{K}_{\rm ac}({\rm Flat}R^{op})$ $=$ $\mathbf{K}_{\rm tac}({\rm Flat}R^{op})$.
\end{proof}

\begin{lem} \label{lem: lemactac120701} The following conditions are true for any ring $R$:
\begin{enumerate}
 \item[{(i)}] If {\rm spli}$(R)$ $<$ $\infty$, then, for each $R$-module $M$, $\Omega^n(M)$ is a cycle of  a complex  in $\mathbf{K}_{\rm ac}({\rm Prj}R)$, where $n$ $=$ {\rm spli}$(R)$.
 \item[{(ii)}] If {\rm sfli}$(R)$ $<$ $\infty$, then, for each $R$-module $M$, $\Omega^n(M)$ is a cycle of  a complex  in $\mathbf{K}_{\rm ac}({\rm Flat}R)$, where $n$ $=$ {\rm sfli}$(R)$.
\item[{(iii)}] If {\rm silp}$(R)$ $<$ $\infty$, then, for each $R$-module $M$, $\Omega^{-n}(M)$ is a cycle of  a complex  in $\mathbf{K}_{\rm ac}({\rm Inj}R)$, where $n$ $=$ {\rm silp}$(R)$.

\item[{(iv)}] If all terms of an acyclic complex ${Q^\bullet}$ have finite projective (resp., flat) dimensions  at most  $n$, then, for each cycle $M$ of the complex ${Q^\bullet}$, $\Omega^n(M)$ is a cycle of a complex  in $\mathbf{K}_{\rm ac}({\rm Prj}R)$ (resp., $\mathbf{K}_{\rm ac}({\rm Flat}R)$).

\end{enumerate}
\end{lem}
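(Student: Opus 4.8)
The plan is to prove part (iv) first by a syzygy-splicing argument, and then deduce (i), (ii) and (iii) from it, the injective statement (iii) requiring the evident dual of (iv).

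For (iv) in the projective case I would encode the acyclicity of $Q^\bullet$ as a family of short exact sequences $0 \to Z_{k+1} \to Q_{k+1} \to Z_k \to 0$, where $Z_k$ is the $k$-th cycle. First I would fix, once and for all, a projective resolution of each cycle $Z_k$; applying the horseshoe lemma to each short exact sequence then produces a projective resolution of $Q_{k+1}$ together with a short exact sequence of $n$-th syzygies
$$0 \to \Omega^n Z_{k+1} \to \Omega^n Q_{k+1} \to \Omega^n Z_k \to 0.$$
Since $\mathrm{pd}_R Q_{k+1} \le n$, the $n$-th syzygy of $Q_{k+1}$ in any projective resolution is projective, so each middle term here is projective. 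Splicing these short exact sequences over all $k$ — which is legitimate precisely because the fixed resolutions make $\Omega^n Z_k$ literally the same module when it occurs as a quotient in one sequence and as a subobject in the next — yields an acyclic complex of projectives whose $k$-th cycle is $\Omega^n Z_k$. In particular $\Omega^n M$ is a cycle of an acyclic complex of projectives for every cycle $M$ of $Q^\bullet$. The flat case is identical, using that $\mathrm{fd}_R Q_{k+1} \le n$ forces $\Omega^n Q_{k+1}$ to be flat.

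To obtain (i), (ii) and (iii) I would feed a single universal complex into (iv). For an arbitrary module $M$, glue a projective resolution $\cdots \to P_1 \to P_0 \to M$ to an injective coresolution $M \to I^0 \to I^1 \to \cdots$ along $M$, forming the acyclic complex
$$\mathcal{Q}^\bullet:\quad \cdots \to P_1 \to P_0 \to I^0 \to I^1 \to \cdots,$$
in which $P_0 \to I^0$ is the composite $P_0 \twoheadrightarrow M \hookrightarrow I^0$ and $M$ occurs as the cycle $\ker(I^0 \to I^1)$. Now I would read off the relevant bounds: each projective term $P_i$ has $\mathrm{pd}_R P_i = \mathrm{fd}_R P_i = 0$ and $\mathrm{id}_R P_i \le \mathrm{silp}(R)$, while each injective term $I^j$ has $\mathrm{id}_R I^j = 0$, $\mathrm{pd}_R I^j \le \mathrm{spli}(R)$ and $\mathrm{fd}_R I^j \le \mathrm{sfli}(R)$. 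Hence under the hypothesis $\mathrm{spli}(R)=n<\infty$ of (i) every term of $\mathcal{Q}^\bullet$ has projective dimension $\le n$, and (iv) immediately gives that $\Omega^n M$ is a cycle of an acyclic complex of projectives; similarly the hypothesis $\mathrm{sfli}(R)=n$ of (ii) makes every term of flat dimension $\le n$, and the flat case of (iv) yields (ii).

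For (iii) I would apply the injective dual of (iv): when every term of an acyclic complex has injective dimension $\le n$, the same argument run with injective coresolutions, cosyzygies and the dual horseshoe lemma shows that $\Omega^{-n}$ of every cycle is a cycle of an acyclic complex of injectives, since $\Omega^{-n}Q_{k+1}$ is injective because $\mathrm{id}_R Q_{k+1}\le n$. Applying this to $\mathcal{Q}^\bullet$, all of whose terms have injective dimension $\le n=\mathrm{silp}(R)$, yields (iii). The main obstacle is the bookkeeping in the splicing step of (iv): syzygies are only well defined up to projective (resp. flat, resp. injective) summands, so one must commit to fixed resolutions of all the cycles before invoking the horseshoe lemma, and then verify that the resulting short exact sequences of syzygies share the common terms needed to splice them into a single unbounded acyclic complex. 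Everything else is routine exactness-chasing together with the standard fact that a module of projective (flat, injective) dimension at most $n$ has projective (flat, injective) $n$-th syzygy (cosyzygy) in every resolution.
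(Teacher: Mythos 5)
Your proof is correct and rests on exactly the same key tool as the paper's: the Horseshoe Lemma applied to the short exact sequences of cycles of an acyclic complex, followed by splicing the resulting short exact sequences of $n$-th syzygies $0\to\Omega^nZ_{k+1}\to\Omega^nQ_{k+1}\to\Omega^nZ_k\to 0$, with the middle terms projective (resp.\ flat, injective) by the dimension bound. The only difference is organizational: the paper proves (i) directly by running this splicing along the injective coresolution of $M$ alone (using that $\Omega^nM$, being an $n$-th syzygy, already carries a projective resolution on the left) and disposes of (ii)--(iv) as similar/dual, whereas you establish (iv) first and then feed the glued complex $\cdots\to P_1\to P_0\to I^0\to I^1\to\cdots$ into it --- the two computations are the same.
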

\begin{proof} Assertions $(i)$ and $(iii)$ are established in \cite[4.1]{Gecomple}, and the proof given there applies equally to assertions $(ii)$ and $(iv)$.
\end{proof}

\begin{lem} \label{lem: lemactac120701-11} Let $R$ be a ring. If the character module of every injective  $R^{op}$-module has finite flat dimension, then {\rm sfli}$(R^{op})$ $\leq$ {\rm silp}$(R)$.
\end{lem}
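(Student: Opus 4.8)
The plan is to convert the statement about flat dimensions over $R^{op}$ into a statement about injective dimensions over $R$ via character modules, and then bound those injective dimensions by {\rm silp}$(R)$ using Lemmas \ref{lem:lem080205101} and \ref{lem: lemactac120701}. First I would dispose of the trivial case: if {\rm silp}$(R) = \infty$ there is nothing to prove, so assume {\rm silp}$(R) = n < \infty$. For an injective $R^{op}$-module $E$, the adjunction isomorphism $\Tor^R_i(E,N)^+ \cong \Ext^i_R(N,E^+)$ together with the faithful exactness of $(-)^+$ gives the standard identity $\fd_{R^{op}}E = \id_R E^+$. Hence ${\rm sfli}(R^{op}) = \sup_E \id_R E^+$, the supremum ranging over injective $R^{op}$-modules $E$, and by hypothesis each $E^+$ has finite flat dimension over $R$. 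Thus it suffices to prove the general claim: every $R$-module $M$ with $\fd_R M < \infty$ satisfies $\id_R M \le n = {\rm silp}(R)$.

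For this claim I would first record the cosyzygy mechanism. By Lemma \ref{lem: lemactac120701}(iii), since {\rm silp}$(R)=n$, the cosyzygy $C := \Omega^{-n}M$ is a cycle of an acyclic complex of injective $R$-modules. Lemma \ref{lem:lem080205101} then yields $\Ext^1_R(G,C)=0$ for every $R$-module $G$ of finite flat dimension, and dimension shifting along the injective coresolution of $M$ rewrites this as $\Ext^{n+1}_R(G,M)=0$ for every such $G$. In other words, $M$ behaves as though $\id_R M \le n$ when tested against modules of finite flat dimension.

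The main work, and the point I expect to be the real obstacle, is to upgrade this relative vanishing to the genuine bound $\id_R M \le n$, i.e. to vanishing of $\Ext^{n+1}_R(N,M)$ against all $N$; this is exactly where the finiteness of $\fd_R M$ must be spent. I would induct on $d = \fd_R M$. If $d \ge 1$, choose a short exact sequence $0 \to K \to P \to M \to 0$ with $P$ projective; then $\fd_R K = d-1$, and since {\rm silp}$(R)=n$ forces $\id_R P \le n$, the long exact $\Ext$-sequence gives $\id_R M \le \max(\id_R P,\, \id_R K - 1) \le n$ once $\id_R K \le n$ is known by induction. This reduces everything to the base case $d=0$: every flat module $F$ has $\id_R F \le n$. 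For flat $F$ I would use a short exact sequence $0 \to K \to Q \to F \to 0$ with $Q$ projective; flatness of $F$ forces $K$ flat, since $\Tor^R_i(K,-)\cong\Tor^R_{i+1}(F,-)=0$, and as $\id_R Q \le n$ the same $\Ext$-inequality gives $\id_R F \le \max(n,\, \id_R K - 1)$.

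The hard part will be that this last inequality is self-referential: writing $s = \sup\{\id_R F : F \text{ flat}\}$ it only yields $s \le \max(n, s-1)$, which forces $s \le n$ provided one first knows $s < \infty$. Establishing that finiteness is the crux, and it is precisely here that the relative vanishing of Lemma \ref{lem:lem080205101} (applied to the cycle $C$ of an acyclic complex of injectives) together with the finite flat dimension of the modules in play must be combined to exclude $s = \infty$; concretely I would try to produce, for each flat $F$, a finite flat dimension module detecting $\Ext^{n+1}_R(-,F)$, so that the relative vanishing becomes absolute. Once $\id_R F \le n$ is secured for flat $F$, the induction completes the claim, and feeding $M = E^+$ back through $\fd_{R^{op}}E = \id_R E^+$ gives $\fd_{R^{op}}E \le n$ for every injective $R^{op}$-module $E$, that is, ${\rm sfli}(R^{op}) \le {\rm silp}(R)$.
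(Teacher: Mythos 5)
Your reduction is the right one and, up to a point, matches the paper's strategy: the identity $\fd_{R^{op}}E=\id_R E^+$ is correct, so everything comes down to showing that an $R$-module of finite flat dimension has injective dimension at most $n=\mathrm{silp}(R)$, and your descending induction on flat dimension correctly reduces this to the base case that flat modules have injective dimension at most $n$. But the base case is exactly where the argument stops being a proof. The relative vanishing you extract from Lemma \ref{lem: lemactac120701}(iii) and Lemma \ref{lem:lem080205101} --- namely $\Ext^{n+1}_R(G,M)=0$ for $G$ of finite flat dimension --- cannot be upgraded to $\id_R M\le n$ by formal manipulation, because injective dimension is detected by $\Ext$ against the cyclic modules $R/I$ (Baer's criterion), and these have no reason to have finite flat dimension; and your self-referential inequality $s\le\max(n,s-1)$ is vacuous unless you first know $s<\infty$, which you do not establish. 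So there is a genuine gap: you located the obstruction accurately but did not remove it.

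The missing idea, and the one the paper actually uses, is to trade flat dimension for \emph{projective} dimension rather than attack injective dimension directly. By \cite[Proposition 1.3(i)]{I on the fin} the finitistic projective dimension of $R$ is at most $\mathrm{silp}(R)=n$; by Jensen's theorem \cite[Proposition 6]{CUJensen} this finiteness forces every flat $R$-module to have finite projective dimension, hence every module of finite flat dimension has finite --- and therefore, by the finitistic bound again, at most $n$ --- projective dimension. Once $\pd_R E^+\le n$ is known, $E^+$ admits a finite projective resolution all of whose terms have injective dimension at most $n$ (which is precisely what $\mathrm{silp}(R)=n$ asserts), and a routine long-exact-sequence argument along that resolution gives $\id_R E^+\le n$, hence $\fd_{R^{op}}E\le n$ and $\mathrm{sfli}(R^{op})\le\mathrm{silp}(R)$. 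Note that the paper's proof makes no use of Lemma \ref{lem:lem080205101} or of the cosyzygy mechanism of Lemma \ref{lem: lemactac120701} here; the entire content is Jensen's theorem combined with the inequality $\mathrm{findim}(R)\le\mathrm{silp}(R)$, and without some such input your scheme cannot close.
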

\begin{proof}  If silp$(R)$ $=$ $\infty$, then {\rm sfli}$(R^{op})$ $\leq$ {\rm silp}$(R)$. If silp$(R)$ $<$ $\infty$, we set {\rm silp}$(R)$ $=$ $n$. Let $M$ be any injective  $R^{op}$-module. Then  {\rm fd}$_R M^+$ $<$ $\infty$ by hypothesis. Note that the supremum of the projective dimensions of those $R$-modules with finite projective dimension is at most $n$ by \cite[Proposition 1.3($i$)]{I on the fin}. It follows from \cite[Proposition 6]{CUJensen} that every flat $R$-module has finite projective dimension at most $n$, and therefore {\rm pd}$_R M^+$ $<$ $\infty$. Applying \cite[Proposition 1.3($i$)]{I on the fin} again, we have {\rm pd}$_R M^+$ $\leq$ $n$. Thus one can check that  {\rm id}$_R M^+$ $\leq$ $n$, and hence {\rm fd}$_R M$ $\leq$ $n$. This shows sfli$(R^{op})$ $\leq$ {\rm silp}$(R)$, as needed.
\end{proof}

{\bf Proof of Theorem \ref{ThA}.}
$(i)$ If spli$(R)$ $=$ $\infty$ and silp$(R)$ $=$ $\infty$, then {\rm silp}$(R)$ $=$ {\rm spli}$(R)$.
If spli$(R)$ $<$ $\infty$,  we assume that spli$(R)$ $=$ $n$. Then, for any $R$-module $M$,  $\Omega^{n}(M)$ is a cycle of a complex  in $\mathbf{K}_{\rm ac}({\rm Prj}R)$ by Lemma \ref{lem: lemactac120701}$(i)$.
Since  spli$(R)$ $<$ $\infty$, one has $\mathbf{K}_{\rm ac}({\rm Inj}R)$ $=$  $\mathbf{K}_{\rm tac}({\rm Inj}R)$, and the proof is similar to that of Proposition \ref{lem:lem1501}$(i)$. Consequently, we obtain $\mathbf{K}_{\rm ac}({\rm Prj}R)$ $=$  $\mathbf{K}_{\rm tac}({\rm Prj}R)$ by assumption. This implies that $\Omega^{n}(M)$ is a cycle of  a complex in $\mathbf{K}_{\rm tac}({\rm Prj}R)$. Let $Q$ be any projective $R$-module. Thus Ext$^1_R(\Omega^{n}(M),Q)$ $=$ $0$ for every  $R$-module $M$. It follows that Ext$^{n+1}_R(M,Q)$ $=$ $0$ for every $R$-module $M$. This shows id$_R Q$ $\leq$ $n$, and therefore silp$(R)$ $<$ $\infty$. So we have silp$(R)$ $=$ spli$(R)$ by \cite[Proposition 1.3($iv$)]{I on the fin}.

If silp$(R)$ $<$ $\infty$, we assume that silp$(R)$ $=$ $n$. Then, for each  $R$-module $N$,  $\Omega^{-n}(N)$ is a cycle of a complex  in $\mathbf{K}_{\rm ac}({\rm Inj}R)$ by Lemma \ref{lem: lemactac120701}$(iii)$.  Note that $\mathbf{K}_{\rm ac}({\rm Inj}R)$ $=$  $\mathbf{K}_{\rm tac}({\rm Inj}R)$ by assumption. Then $\Omega^{-n}(N)$ is a cycle of a complex  in $\mathbf{K}_{\rm tac}({\rm Inj}R)$.  Let $E$ be any injective  $R$-module.  Then Ext$^1_R(E,\Omega^{-n}(N))$ $=$ $0$ for every $R$-module $N$. It follows that Ext$^{n+1}_R(E,N)$ $=$ $0$ for every  $R$-module $N$. Thus pd$_RE$ $\leq$ $n$. So spli$(R)$ $<$ $\infty$. By \cite[Proposition 1.3($iv$)]{I on the fin}, spli$(R)$ $=$ silp$(R)$. This completes the proof.

By the proof above, spli$(R)$ $=$ silp$(R)$. Applying \cite[Proposition 2.2]{EnochMatrix}, we obtain the desired characterizations of the left Gorenstein regular ring $R$.

$(ii)$ To prove {\rm sfli}$(R)$ $=$ {\rm sfli}$(R^{op})$, we only need to check that the equality holds provided one of  the two invariants is finite. Similarly, to show {\rm silp}$(R)$ $\leq$ {\rm spli}$(R)$, it suffices to prove that the inequality holds provided that {\rm spli}$(R)$ is finite.

$(a)$ Assume that the two  conditions ${\rm (2)}$ and ${\rm (3)}$ are equivalent.

If sfli$(R)$ $<$ $\infty$, then $\mathbf{K}_{\rm ac}({\rm Inj}R)$ $=$ $\mathbf{K}_{\rm tac}({\rm Inj}R)$ by Proposition \ref{lem:lem1501}$(ii)$. Hence $\mathbf{K}_{\rm ac}({\rm Flat}R)$ $=$ $\mathbf{K}_{\rm tac}({\rm Flat}R)$ by assumption. Let {\rm sfli}$(R)$ $=$ $n$. For any  $R$-module $N$, $\Omega^{n}(N)$ is a cycle of a complex ${F^\bullet}$ in $\mathbf{K}_{\rm ac}({\rm Flat}R)$ by Lemma \ref{lem: lemactac120701}$(ii)$. It follows that $E\otimes_R{F^\bullet}$ is acyclic for every injective $R^{op}$-module $E$. Thus Tor$_1^R(E,\Omega^n (N))$ $=$ $0$ for every injective  $R^{op}$-module $E$. Then Tor$_{n+1}^R(E, N)$ $=$ $0$. It follows that fd$_{R^{op}}E$ $\leq$ $n$. So sfli$(R^{op})$ $\leq$ $n$. By \cite[Theorem 5.3]{I on the fin}, {\rm sfli}$(R)$ $=$ {\rm sfli}$(R^{op})$.

If sfli$(R^{op})$ $<$ $\infty$, then $\mathbf{K}_{\rm ac}({\rm Inj}R^{op})$ $=$  $\mathbf{K}_{\rm tac}({\rm Inj}R^{op})$ by Proposition \ref{lem:lem1501}$(ii)$, and so we obtain $\mathbf{K}_{\rm ac}({\rm Flat}R)$ $=$  $\mathbf{K}_{\rm tac}({\rm Flat}R)$ by Proposition \ref{prop: prop0801}. Then $\mathbf{K}_{\rm ac}({\rm Inj}R)$ $=$  $\mathbf{K}_{\rm tac}({\rm Inj}R)$ by assumption. It follows from Proposition \ref{prop: prop0801} that $\mathbf{K}_{\rm ac}({\rm Flat}R^{op})$ $=$  $\mathbf{K}_{\rm tac}({\rm Flat}R^{op})$, and therefore we have  sfli$(R)$ $=$ sfli$(R^{op})$ by the proof in the paragraph above.

By the proof above  sfli$(R)$ $=$ sfli$(R^{op})$. Applying \cite[Theorem 5.3]{I on the fin},  we obtain the characterization of the finiteness of wGgldim$(R)$.

Next we prove that {\rm silp}$(R)$ $\leq$ {\rm spli}$(R)$.   If spli$(R)$ $<$ $\infty$, then sfli$(R)$ $<$ $\infty$ by noting that  sfli$(R)$ $\leq$ spli$(R)$. By the proof above, we have {\rm sfli}$(R)$ $=$ {\rm sfli}$(R^{op})$, and so  sfli$(R^{op})$ $<$ $\infty$. So \cite[Corollary 22]{DemEmaHom} shows {\rm silp}$(R)$ $=$ {\rm spli}$(R)$.

$(b)$ Assume that the two  conditions ${\rm (2)}$ and ${\rm (2)}^{op}$ are equivalent.

If sfli$(R)$ $<$ $\infty$, then the condition (2)  holds by Proposition \ref{lem:lem1501}$(ii)$, and so the condition (2)$^{op}$ holds by assumption. Applying  Proposition \ref{prop: prop0801}, the condition (3) holds. By $(ii)(a)$, we have {\rm sfli}$(R)$ $=$ {\rm sfli}$(R^{op})$.

 If sfli$(R^{op})$ $<$ $\infty$, then we have {\rm sfli}$((R^{op})^{op})$ $=$ {\rm sfli}$(R^{op})$. So {\rm sfli}$(R)$ $=$ {\rm sfli}$(R^{op})$, as needed.

If spli$(R)$ $<$ $\infty$, then  the condition (2) holds. This implies that the condition (2)$^{op}$ holds by assumption. Applying Proposition \ref{prop: prop0801}, the condition (1) holds. By $(i)$, we have {\rm spli}$(R)$ $=$ {\rm silp}$(R)$, and therefore silp$(R)$ $\leq$ spli$(R)$.

$(c)$ Assume that the two  conditions ${\rm (3)}$ and ${\rm (3)}^{op}$ are equivalent.

If sfli$(R)$ $<$ $\infty$, then the condition (2)  holds by Proposition \ref{lem:lem1501}$(ii)$, and so the condition (3)$^{op}$ holds by Proposition \ref{prop: prop0801}. By assumption, the condition (3) holds. By $(ii)(a)$, we have {\rm sfli}$(R)$ $=$ {\rm sfli}$(R^{op})$.

If sfli$(R^{op})$ $<$ $\infty$, then we have {\rm sfli}$((R^{op})^{op})$ $=$ {\rm sfli}$(R^{op})$ by the proof above. Hence {\rm sfli}$(R)$ $=$ {\rm sfli}$(R^{op})$.

If spli$(R)$ $<$ $\infty$, then  sfli$(R)$ $<$ $\infty$. Hence the two conditions (2) and ${\rm (3)}^{op}$ hold by Proposition \ref{lem:lem1501}$(ii)$, and so the condition (3) holds by assumption.  By $(ii)(a)$, we have {\rm silp}$(R)$ $\leq$ {\rm spli}$(R)$.
This completes the proof. \hfill$\Box$

{\bf Proof of  Corollary \ref{cor:corcothac1}.}
Note that {\rm silp}$(R)$ $\leq$ {\rm spli}$(R)$ by Theorem \ref{ThA}$(ii)$. If {\rm silp}$(R)$ $\neq$ {\rm spli}$(R)$, then silp$(R)$ $<$ $\infty$. By Lemma \ref{lem: lemactac120701-11}, we have sfli$(R^{op})$ $\leq$ silp$(R)$ $<$ $\infty$. Then sfli$(R)$ $=$ sfli$(R^{op})$ $<$ $\infty$ by Theorem \ref{ThA}$(ii)$, and so the $n$-th cozyzygy of any $R$-module is Gorenstein injective. It follows from \cite[Theorem 4.1]{I on the fin} that {\rm silp}$(R)$ $=$ {\rm spli}$(R)$. This leads to a contradiction. Hence {\rm silp}$(R)$ $=$ {\rm spli}$(R)$, as needed.\hfill$\Box$

Recall from the introduction that a ring $R$ is called \emph{left generalized coherent} if all level $R^{op}$-modules
have finite flat dimension. We refer to \cite{PJwandGYang} for a detailed discussion on this matter. The following corollary is a direct consequence of Corollary \ref{cor:corcothac1}.
\begin{cor}\label{corollary:2.8} Let $R$ be a ring.
\begin{enumerate}
\item[(i)] {\rm(}\cite[Corollary 3.4]{B-O-cohe}{\rm)} If $R$ is a left and right coherent ring such that $R$ is isomorphic to the opposite ring $R^{op}$, then {\rm spli}$(R)$ $=$ {\rm silp}$(R)$.

\item[(ii)] {\rm(}\cite[Corollary 3.16]{PJwandGYang}{\rm)} If $R$ is a commutative generalized coherent ring, then {\rm spli}$(R)$ $=$ {\rm silp}$(R)$.
\end{enumerate}
\end{cor}
\begin{proof} $(i)$ By \cite[Theorem 1]{ChSflat},  the character module of every injective $R^{op}$-module is a flat $R$-module provided that $R$ is right coherent. Now, $(i)$ is a direct consequence of
Corollary \ref{cor:corcothac1}.

$(ii)$  According to \cite[Theorem 2.12]{BGGIAC}, the character module  of every injective $R$-module is a level $R^{op}$-module. Note that $R$ is a commutative generalized coherent ring. It follows that the character module of every injective $R$-module has finite flat dimension. So $(ii)$ follows from Corollary \ref{cor:corcothac1}.
\end{proof}

{\bf Proof of  Theorem \ref{cor: thmeq2-1}.}
$(i)$ $\Rightarrow$ $(ii)$. If $R$ is Iwanaga-Gorenstein, then {\rm sfli$(R)$} $<$ $\infty$ and {\rm sfli$(R^{op})$} $<$ $\infty$ by \cite[Theorem 9.1.11]{EJ}. So Proposition \ref{lem:lem1501}$(ii)$ yields the desired result.

$(ii)$ $\Rightarrow$ $(iii)$. By Proposition \ref{prop: prop0801}, all acyclic complexes of projective $R$-modules or $R^{op}$-modules are totally acyclic. Let $M$ be any Gorenstein projective $R$-module. Then $M$ is a cycle of  a complex ${P^\bullet}$ in $\mathbf{K}_{\rm tac}({\rm Prj}R)$, and therefore $M^+$ is a cycle of the complex  ${P^\bullet}^+$ in $\mathbf{K}_{\rm ac}({\rm Inj}R^{op})$. Note that $\mathbf{K}_{\rm ac}({\rm Inj}R^{op})$ $=$  $\mathbf{K}_{\rm tac}({\rm Inj}R^{op})$ by $(ii)$. It follows that $M^+$ is Gorenstein injective. By \cite[Theorem 2.2]{I on the fin}, $M$ is Gorenstein flat.  Hence we obtain that all Gorenstein projective  $R$-modules are Gorenstein flat. Similarly, one can prove the result over the ring $R^{op}$.

$(iii)$ $\Rightarrow$ $(iv)$.  Let ${P^\bullet}$  be a complex in $\mathbf{K}_{\rm ac}({\rm Prj}R)$. Then the complex ${P^\bullet}$ is in $\mathbf{K}_{\rm ac}({\rm Flat}R)$. By  $(iii)$ and Corollary \ref{cor:2.5}, we obtain that ${P^\bullet}$ is in $\mathbf{K}_{\rm tac}({\rm Flat}R)$. Similarly, one can check that all acyclic complexes of projective $R^{op}$-modules are $F$-totally acyclic.

 $(iv)$ $\Rightarrow$ $(v)$ follows from Lemma \ref{lem: lemaftac-3}$(i)$.

$(v)$ $\Rightarrow$ $(vi)$  follows from Proposition \ref{prop: prop0801}.

$(vi)$ $\Rightarrow$ $(i)$. Consider the minimal injective coresolution of the $R$-module $R^{++}$:
\begin{equation}\label{eq2.1}
\xymatrix@C=2em{0\ar[r]&R^{++}\ar[r]&E^{0}(R^{++})\ar[r]^{f^{0}}&E^{1}(R^{++})\ar[r]^{\ \ \ \ f^{1}}& \cdots \ar[r]&E^{j}(R^{++})\ar[r]^{\ \ \ \ f^{j}}&\cdots.}
\end{equation}
 Note that {\rm fd}$_R E^j(R)$ $\leq$ $n$ for all $j$ $\geq$ $0$ by hypothesis. Applying \cite[Corollary 3.2]{Huang Auscondition}, one has {\rm fd}$_R E^j(R^{++})$ $\leq$ $n$ for all $j$ $\geq$ $0$. We choose  $\Omega^{-j}(R^{++})$ to be ker$(f^j)$ for all $j$ $\geq$ $0$. Then there exists an acyclic complex of $R$-modules whose terms all have flat dimension at most $ n$, and in which every $\Omega^{-j}(R^{++})$ appears as a cycle of this complex for all $j$ $\geq$ $0$. By Lemma \ref{lem: lemactac120701}$(iv)$, $\Omega^n(\Omega^{-j}(R^{++}))$ is a cycle of a complex in $\mathbf{K}_{\rm ac}({\rm Flat}R)$ for all $j$ $\geq$ $0$. In particular, $\Omega^n(\Omega^{-(n+1)}(R^{++}))$ is Gorenstein flat by $(vi)$, and hence the Gorenstein flat dimension of $\Omega^{-(n+1)}(R^{++})$ is at most $n$. Since $R$ is right Noetherian, $R^{++}$ is a flat $R$-module.
By the argument established above, each term in the  minimal injective coresolution {\rm (2.1)} has finite flat dimension. This shows fd$_R\Omega^{-(n+1)}(R^{++})$ $<$ $\infty$, and therefore fd$_R\Omega^{-(n+1)}(R^{++})$ $\leq$ $n$ by \cite[Theorem 2.2]{Bennis2011}. Dimension shifting shows that Ext$^{n+1}_R(\Omega^{-(n+1)}(R^{++}),R^{++})$ $=$ $0$. It follows that ${\rm Ext}^1_R(\Omega^{-(n+1)}(R^{++}),\Omega^{-n}(R^{++})) = 0$, and therefore the exact sequence $0\ra \Omega^{-n}(R^{++}) \ra E^{n}(R^{++})\ra \Omega^{-(n+1)}(R^{++})\ra 0$ splits and $\Omega^{-n}(R^{++})$ is injective. Hence $R^{++}$ has finite injective dimension. By \cite[Lemma 9.1.5]{EJ}, ${\rm id}_R R < \infty$, and so ${\rm silp}(R) < \infty$. By assumption, conditions (2) and (3) hold. Thus Corollary \ref{cor:corcothac1} implies ${\rm silp}(R) = {\rm spli}(R)$, and therefore ${\rm spli}(R) < \infty$. Since ${\rm sfli}(R) \le {\rm spli}(R)$, we have ${\rm sfli}(R) < \infty$. Finally, \cite[Proposition 9.1.6]{EJ} gives ${\rm id}_{R^{\rm op}} R < \infty$, so $R$ is Iwanaga--Gorenstein.
\hfill$\Box$

\vspace{2mm}
We end this section with the following result which contains Corollary \ref{cor:1.5} in the introduction.

\begin{cor}\label{cor:2.10}
 Let $R$ be a finite-dimensional algebra over a field with infinite dominant dimension. Then the following are equivalent:
\begin{enumerate}
\item[(i)] $R$ is self-injective.
\item[(ii)] All acyclic complexes of injective $R$-modules or $R^{op}$-modules are totally acyclic.

\item[(iii)] All acyclic complexes of projective $R$-modules or $R^{op}$-modules are totally acyclic.
\item[(iv)] All acyclic complexes of injective $R$-modules are totally acyclic.
\item[(v)] All acyclic complexes of projective $R$-modules are totally acyclic.
\item[(vi)] All acyclic complexes of injective  $R^{op}$-modules are totally acyclic.
\item[(vii)] All acyclic complexes of projective  $R^{op}$-modules are totally acyclic.
\end{enumerate}
\end{cor}
\begin{proof} The assertions $(ii)\Rightarrow(iv)$,  $(ii)\Rightarrow(vi)$, $(iii)\Rightarrow(v)$ and $(iii)\Rightarrow(vii)$ are trivial. Since $R$ is a finite-dimensional algebra, it follows that all Gorenstein projective $R$-modules or $R^{op}$-modules are exactly Gorenstein flat. By Proposition \ref{prop: prop0801}, we have $(iv)\Leftrightarrow(vii)$ and $(v)\Leftrightarrow(vi)$.

Let $0\ra R \ra E^0(R)\ra \cdots \ra E^{i}(R)\ra \cdots$ be a minimal injective coresolution of the $R$-module $R$. Since $R$ has infinite dominant dimension, all the $E^{i}(R)$ are projective. This implies that $R$ is an injective $R$-module if and only if the $R$-module $R$ has finite injective dimension. Remark that the dominant dimension of $R$ is left-right symmetric by \cite[Theorem 4]{Muller}, and the condition that $R$ is self-injective is also left-right symmetric. So the assertions $(i)\Leftrightarrow(ii)\Leftrightarrow(iii)$ follow from Theorem \ref{cor: thmeq2-1}.

$(v)\Rightarrow(i)$.  Consider the short exact sequence $0\to R\to E^0(R)\to L\to 0 $ of $R$-modules, where $E^0(R)$ is the injective envelope of $R$. Note that $R$ has infinite dominant dimension by hypothesis. It follows that $L$ is a cycle of a complex in  $\mathbf{K}_{\rm ac}({\rm Prj}R)$, and therefore $L$ is a cycle of a complex  in $\mathbf{K}_{\rm tac}({\rm Prj}R)$. Thus we have  $\Ext_{R}^{1}(L,R)=0$. It follows that  $0\to R\to E^0(R)\to L\to 0 $ is split, and so $R$ is an injective $R$-module, as desired.

$(vii)\Rightarrow(i)$. The proof is similar to that of $(v)\Rightarrow(i)$.
\end{proof}

\section{\bf Relations among totally acyclic complexes}
Recall that an $R$-module $M$ is called \emph{$FP$-injective} (or \emph{absolutely pure}) \cite{MA,S} if ${\rm Ext}_R^{1}(N,M)=0$ for all finitely presented $R$-modules $N$. Following \cite{C}, a ring  $R$ is called a \emph{right IF ring} if sfli$(R^{op})$ $=$ $0$. A ring $R$ is called an \emph{IF ring} if sfli$(R)$ $=$ sfli$(R^{op})$ $=$ $0$. In general, a right coherent and right IF ring may not be an IF ring (see \cite{C}). To discuss the relations among totally acyclic complexes, we need the following result.

\begin{prop}\label{thm:cor08020301} Let $R$ be a right coherent and right IF ring. If $\mathbf{K}_{\rm ac}({\rm Prj}R^{op})$ $=$  $\mathbf{K}_{\rm tac}({\rm Prj}R^{op})$, then $R$ is an IF ring.
\end{prop}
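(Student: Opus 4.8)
The plan is to manufacture condition $(3)^{op}$ and then combine it with condition $(3)$ through Theorem \ref{ThA}(ii)(c). I would begin by translating the hypotheses. That $R$ is a right IF ring means ${\rm sfli}(R^{op})=0$, so in particular ${\rm sfli}(R^{op})<\infty$; feeding this into Proposition \ref{lem:lem1501}(ii) with $R^{op}$ in the role of $R$ yields, for free, both $\mathbf{K}_{\rm ac}({\rm Flat}R)=\mathbf{K}_{\rm tac}({\rm Flat}R)$ (condition $(3)$) and $\mathbf{K}_{\rm ac}({\rm Inj}R^{op})=\mathbf{K}_{\rm tac}({\rm Inj}R^{op})$ (condition $(2)^{op}$). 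Thus $(3)$ is already available, and the entire problem collapses to deriving its opposite $(3)^{op}$ from the two remaining hypotheses, namely right coherence and the given identity $\mathbf{K}_{\rm ac}({\rm Prj}R^{op})=\mathbf{K}_{\rm tac}({\rm Prj}R^{op})$ (condition $(1)^{op}$).

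The core of the argument is to move from the Hom-total acyclicity encoded in $(1)^{op}$ to the tensor (F-)total acyclicity required by $(3)^{op}$. By Lemma \ref{lem: lemaftac-3}(i), applied to $R^{op}$, it is enough to establish the single containment $\mathbf{K}_{\rm ac}({\rm Prj}R^{op})\subseteq\mathbf{K}_{\rm tac}({\rm Flat}R^{op})$: that every acyclic complex ${P^\bullet}$ of projective $R^{op}$-modules satisfies ${P^\bullet}\otimes_R E$ acyclic for each injective $R$-module $E$. Fix such a ${P^\bullet}$. By $(1)^{op}$ it is totally acyclic, so every cycle of ${P^\bullet}$ is a Gorenstein projective $R^{op}$-module. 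This is the precise point at which right coherence is used: over a right coherent ring each Gorenstein projective right module is Gorenstein flat (\cite{H}), so each cycle $Z$ of ${P^\bullet}$ is a Gorenstein flat $R^{op}$-module. By the definition of Gorenstein flatness, $Z$ is a cycle of an F-totally acyclic complex of flat $R^{op}$-modules, and hence $\Tor^R_1(Z,E)=0$ for every injective $R$-module $E$. Since the terms of ${P^\bullet}$ are flat, the vanishing of these $\Tor$ groups over all cycles forces ${P^\bullet}\otimes_R E$ to be acyclic for every injective $R$-module $E$. This is exactly the desired containment, and Lemma \ref{lem: lemaftac-3}(i) then promotes it to $(3)^{op}$.

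Once both $(3)$ and $(3)^{op}$ hold they are, trivially, equivalent statements about $R$, so hypothesis (c) of Theorem \ref{ThA}(ii) is met; that theorem delivers ${\rm sfli}(R)={\rm sfli}(R^{op})$, which together with ${\rm sfli}(R^{op})=0$ gives ${\rm sfli}(R)={\rm sfli}(R^{op})=0$, i.e. $R$ is an IF ring. The step I expect to be the main obstacle is the middle one, extracting tensor-acyclicity from the Hom-acyclicity of $(1)^{op}$. The tempting shortcut via character modules, $({P^\bullet}\otimes_R E)^+\cong\Hom_R(E,({P^\bullet})^+)$, only reduces $(3)^{op}$ to condition $(2)$ for the injective complex $({P^\bullet})^+$, and is therefore circular; the substantive input that breaks the circle is the coherence-dependent implication ``Gorenstein projective $\Rightarrow$ Gorenstein flat''. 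Care is needed that the side of the coherence hypothesis matches the side of the modules --- here both are on the right, so right coherence is exactly what is required --- and one should confirm that cycles of a totally acyclic complex of projectives are genuinely Gorenstein projective, so that \cite{H} applies.
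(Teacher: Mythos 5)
Your global strategy --- manufacture conditions $(3)$ and $(3)^{op}$ and then feed their (now trivial) equivalence into Theorem \ref{ThA}(ii)(c) to get ${\rm sfli}(R)={\rm sfli}(R^{op})=0$ --- is internally coherent, and the first and last steps are fine. The gap is exactly where you predicted it, in the middle step, and as written it is fatal. The implication ``Gorenstein projective $\Rightarrow$ Gorenstein flat'' that you import from \cite{H} pairs \emph{right} coherence with \emph{left} modules: to see that a cycle $Z$ of a totally acyclic complex of projective left modules is Gorenstein flat one tests against injective \emph{right} modules $I$, and the role of coherence is precisely to make $I^{+}$ a flat \emph{left} module; so the coherence hypothesis sits on the side of the test injectives, which is \emph{opposite} to the side of the module $Z$. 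Your cycles are $R^{op}$-modules, tested against injective left $R$-modules $E$, so what you need is that $E^{+}$ is a flat right module, i.e.\ that $R$ is \emph{left} coherent --- and the Proposition only supplies right coherence. Your parenthetical ``here both are on the right, so right coherence is exactly what is required'' has the bookkeeping exactly backwards. A secondary problem: even on the correct side, the implication as established in \cite{H} carries an additional finiteness hypothesis (flat modules of finite projective dimension), which is what allows one to pass from $\Ext^{\geq 1}(Z,\mathrm{proj})=0$ to $\Ext^{\geq 1}(Z,E^{+})=0$ for the merely flat module $E^{+}$; coherence alone does not yield ``Gorenstein projective $\Rightarrow$ Gorenstein flat''.

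The paper uses the two hypotheses quite differently, and much more cheaply. Right IF-ness together with \cite[Theorem 1]{C} lets one embed every finitely presented $R^{op}$-module $N$ into a finitely generated free module with finitely presented cokernel; iterating and splicing with a projective resolution exhibits $N$ as a cycle of an acyclic complex of free $R^{op}$-modules. The hypothesis $\mathbf{K}_{\rm ac}({\rm Prj}R^{op})=\mathbf{K}_{\rm tac}({\rm Prj}R^{op})$ then gives $\Ext^{1}_{R^{op}}(N,R)=0$ for all finitely presented $N$, i.e.\ $R$ is FP-injective as an $R^{op}$-module, and \cite[Theorem 3.8]{DC} converts this, using right coherence, into ${\rm sfli}(R)=0$. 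If you want to rescue your route, you must extract tensor-acyclicity of $P^{\bullet}$ against injective left modules without passing through left coherence; the paper's reduction to finitely presented cycles is precisely such a device, and it bypasses Gorenstein flatness altogether.
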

\begin{proof} It suffices to show that sfli$(R)$ $=$ $0$. By \cite[Theorem 3.8]{DC}, we  need to check that $R$ is $FP$-injective as an $R^{op}$-module.

Let $N$ be any finitely presented $R^{op}$-module. Note that $R$ is right IF by assumption. Thus $N$ is a submodule of a free  $R^{op}$-module by \cite[Theorem 1]{C}. Since $N$ is finitely generated, one can check that $N$ is also a submodule of a finitely generated free  $R^{op}$-module $F_0$. Then we have an exact sequence $$0\ra N\ra F_0\ra N_0\ra 0$$ of  $R^{op}$-modules. It is clear that $N_0$ is finitely presented. Repeating the procedure, we obtain an exact sequence $$0\ra N\ra F_0\ra F_{-1}\ra \cdots \ra F_{-n}\ra \cdots$$  of  $R^{op}$-modules with each $F_i$ finitely generated free. It is clear that there is an acyclic complex  $$\cdots\ra F_{n}\ra\cdots\ra F_{1}\ra F_0\ra F_{-1}\ra \cdots\ra F_{-n}\ra \cdots$$ of free  $R^{op}$-modules such that $N$ $\cong$ ker($F_{0}\ra F_{-1}$). It follows that $N$ is a cycle of a  complex in $\mathbf{K}_{\rm ac}({\rm Prj}R^{op})$.  By assumption, $\mathbf{K}_{\rm ac}({\rm Prj}R^{op})$ $=$  $\mathbf{K}_{\rm tac}({\rm Prj}R^{op})$. Then Ext$^1_R(N,R)$ $=$ $0$, and so $R$ is $FP$-injective as an  $R^{op}$-module. This completes the proof.
\end{proof}

\begin{exa} \label{em: rem2060501} Let $R$ be a right coherent and right IF ring which is not an IF ring (see \cite[Example 2]{C}). Then the following conditions hold.

$(i)$ $\mathbf{K}_{\rm ac}({\rm Prj}R^{op})$ $\neq$  $\mathbf{K}_{\rm tac}({\rm Prj}R^{op})$. Note that $R$ is a right coherent and right IF ring by assumption. If $\mathbf{K}_{\rm ac}({\rm Prj}R^{op})$ $=$  $\mathbf{K}_{\rm tac}({\rm Prj}R^{op})$, then $R$ is an IF ring by Proposition \ref{thm:cor08020301}. It is a contradiction.

$(ii)$   $\mathbf{K}_{\rm ac}({\rm Inj}R^{op})$ $=$  $\mathbf{K}_{\rm tac}({\rm Inj}R^{op})$. Note that sfli$(R^{op})$ $=$ $0$ by assumption. Then  we obtain the result by Proposition \ref{lem:lem1501}$(ii)$.

$(iii)$ $\mathbf{K}_{\rm ac}({\rm Flat}R^{op})$ $\neq$  $\mathbf{K}_{\rm tac}({\rm Flat}R^{op})$.  Assume $\mathbf{K}_{\rm ac}({\rm Flat}R^{op})$ $=$  $\mathbf{K}_{\rm tac}({\rm Flat}R^{op})$. Then we have $\mathbf{K}_{\rm ac}({\rm Prj}R^{op})$ $=$  $\mathbf{K}_{\rm tac}({\rm Prj}R^{op})$ by Proposition \ref{prop: prop0801}. By $(i)$, we have $\mathbf{K}_{\rm ac}({\rm Prj}R^{op})$ $\neq$  $\mathbf{K}_{\rm tac}({\rm Prj}R^{op})$. It is a contradiction.

$(iv)$  $\mathbf{K}_{\rm ac}({\rm Prj}R)$ $=$ $\mathbf{K}_{\rm tac}({\rm Prj}R)$. We obtain the result by $(ii)$ and Proposition \ref{prop: prop0801}.

$(v)$ $\mathbf{K}_{\rm ac}({\rm Inj}R)$ $\neq$  $\mathbf{K}_{\rm tac}({\rm Inj}R)$.  Assume $\mathbf{K}_{\rm ac}({\rm Inj}R)$ $=$  $\mathbf{K}_{\rm tac}({\rm Inj}R)$.  Then $\mathbf{K}_{\rm ac}({\rm Prj}R^{op})$ $=$  $\mathbf{K}_{\rm tac}({\rm Prj}R^{op})$  by Proposition \ref{prop: prop0801}. Note that $\mathbf{K}_{\rm ac}({\rm Prj}R^{op})$ $\neq$  $\mathbf{K}_{\rm tac}({\rm Prj}R^{op})$ by $(i)$. It is a contradiction.

$(vi)$ $\mathbf{K}_{\rm ac}({\rm Flat}R)$ $=$  $\mathbf{K}_{\rm tac}({\rm Flat}R)$. This follows from $(ii)$ and Proposition \ref{prop: prop0801}.

\end{exa}

\begin{exa} \label{em: rem2060501-2} Assume that $R_1$ is a commutative coherent ring with wGgldim$(R_1)$ $=$ $\infty$, $R_2$ is not a right coherent ring with wGgldim$(R_2)$ $<$ $\infty$. Let $R$ be the direct product ring of $R_1$ and $R_2$.  Then $R$ is not right coherent by \cite[Example 3.6 3]{DB A Gflat dim-GFclosed}. Note that the character module of every injective $R^{op}$-module has finite flat dimension by \cite[Proposition 3.3]{DB A Gflat dim-GFclosed} and \cite[Theorem 5.3]{I on the fin}. Applying \cite[Theorem  3.4]{DB A Gflat dim-GFclosed}, wGgldim$(R)$ $=$ $\infty$.  Using \cite[Theorem 3.1]{DB A Gflat dim-GFclosed}, one can check that the conditions (2) and (2)$^{op}$ are equivalent for the ring $R$  if and only if the conditions (2) and (2)$^{op}$ are equivalent for the  rings $R_1$ and $R_2$. Note that {\rm wGgldim}$(R_2)$ $<$ $\infty$. By Proposition \ref{lem:lem1501}$(ii)$ and \cite[Theorem 5.3]{I on the fin}, the  conditions  (2) and (2)$^{op}$ hold for the ring $R_2$. It is clear that the two conditions (2) and (2)$^{op}$ are equivalent for the ring $R_1$ since $R_1$ is commutative.  Thus the two conditions (2) and (2)$^{op}$ are equivalent for the ring $R$, and so {\rm spli}$(R)$ $=$ {\rm silp}$(R)$  by Corollary  \ref{cor:corcothac1}.
\end{exa}

\bigskip \centerline {\bf ACKNOWLEDGEMENTS}
\bigskip
This research was partially supported by the Natural Science Foundation of the Jiangsu Higher Education Institutions of China (19KJB110012, 21KJB110003), NSFC (12201264, 12571035, 12171206, 12271481),  Jinling Institute of Technology of China (jit-fhxm-201707) and Jiangsu 333 Project. The authors would like to thank H.X. Chen, L.W. Christensen, Z.Y. Huang and J.P. Wang for helpful discussions on parts of this paper. The authors are grateful to the anonymous referee for helpful comments that improved the presentation.

\bigskip

\end{document}